\newtheorem{thm}{Theorem}[section]
\newtheorem{lem}[thm]{Lemma}
\newtheorem{prop}[thm]{Proposition}
\newtheorem{defn}[thm]{Definition}
\newcommand{\N}{\mathbb{N}}
\newcommand{\Z}{\mathbb{Z}}
\newcommand{\llangle}{\langle\!\langle}
\newcommand{\rrangle}{\rangle\!\rangle}
\newcommand{\bast}{{\displaystyle\ast}}
\newcommand{\mc}[1]{\mathcal{#1}}
\newcommand{\fm}[1]{\mc{#1}^{\pm\bast}}
\newcommand{\ol}[1]{\overline{#1}}
\providecommand{\Area}{\mathop{\rm Area} \nolimits}
\providecommand{\FreeEq}{\mathop{\stackrel{\rm free}{=}} \nolimits}
\providecommand{\Dist}{\mathop{\rm Dist} \nolimits}
\providecommand{\RArea}{\mathop{\rm RArea} \nolimits}
\providecommand{\Rad}{\mathop{\rm Rad} \nolimits}
\providecommand{\Edge}{\mathop{\rm Edge} \nolimits}
\renewcommand{\Vert}{\mathop{\rm Vert} \nolimits}
\title{An Isoperimetric Function for Bestvina-Brady Groups}
\author{Will Dison}
\begin{document}

\begin{abstract}
Given a right-angled Artin group $A$, the associated Bestvina-Brady
group is defined to be the kernel of the homomorphism $A \rightarrow
\Z$ that maps each generator in the standard presentation of $A$ to
a fixed generator of $\Z$.  We prove that the Dehn function of an
arbitrary finitely presented Bestvina-Brady group is bounded above
by $n^4$.  This is the best possible universal upper bound.
\end{abstract}

\maketitle

\section{Introduction}

Dehn functions and right-angled Artin groups are some of the most
studied objects in contemporary geometric group theory.  Among the
most striking works concerning right-angled Artin groups is the
combinatorial Morse theory introduced by Bestvina and Brady in
\cite{Bestvina1} to solve the long-standing question of whether there
exist groups of type FP(2) which are not finitely presented.  The
central objects of study in their theory are the Bestvina-Brady
groups, which arise as the kernels of homomorphisms from
right-angled Artin groups to the integers.

A finite flag simplicial complex $\Delta$ with vertices $v_1,
\ldots, v_k$ defines a right-angled Artin group $A$ with
presentation \[\mc{P}_A = \langle a_1, \ldots, a_k \, | \, [a_i,
a_j] \text{ whenever $v_i$ and $v_j$ are joined by an edge in
$\Delta$} \rangle.\] The \emph{Bestvina-Brady group} $H_\Delta$
associated to $\Delta$ is defined to be the kernel of the
homomorphism $A \rightarrow \Z = \langle t \rangle$ which maps each
$a_i \mapsto t$. In \cite{Bestvina1} the authors prove that the group
$H_\Delta$ is finitely presented if and only if $\Delta$ is simply
connected.  The purpose of this article is to  estimate the
complexity of the word problem in Bestvina-Brady groups by
establishing a universal upper bound on their Dehn functions.

\begin{thm} \label{thm1}
  If $\Delta$ is simply connected then the Dehn function $\delta$ of
  $H_\Delta$ satisfies $\delta(n) \preceq n^4$.
\end{thm}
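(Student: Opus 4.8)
The plan is to transport an efficient van Kampen diagram over the standard presentation $\mc{P}_A$ of the ambient right-angled Artin group to a van Kampen diagram over a finite presentation of $H_\Delta$, using the combinatorial Morse function of Bestvina--Brady to mediate between the two. Fix a finite presentation $\mc{P}_H$ of $H_\Delta$ whose generators map to elements of $A$ of the form $a_i a_j^{-1}$, with $v_i, v_j$ the endpoints of an edge of $\Delta$, and whose defining relators all have bounded length and record (i) that such an element cancels with its inverse, and (ii) for each $2$--cell of $\Delta$ and each commuting pair of generators of $\mc{P}_A$, the consequent identity among these generators; such a presentation exists precisely because $\Delta$ is simply connected, by \cite{Bestvina1}, and explicit forms are available in the literature. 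Given a word $w$ over $\mc{P}_H$ of length $n$ with $w =_{H_\Delta} 1$, substitute for each generator its expression $a_i a_j^{-1}$ to obtain a word $\hat w$ over $\mc{P}_A$ of length at most $2n$ with $\hat w =_A 1$; note that the combinatorial Morse function $f$ (the exponent sum in the generators of $A$) takes only the values $0$ and $1$ along the loop traced by $\hat w$.

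Because $A$ acts properly and cocompactly on the CAT(0) cube complex $\widetilde{X}_A$ (the universal cover of its Salvetti complex), it satisfies a quadratic isoperimetric inequality together with a linear isodiametric inequality, so $\hat w$ bounds a van Kampen diagram $D$ over $\mc{P}_A$ with $\Area(D) \preceq n^2$ and of diameter $\preceq n$. The combinatorial map $D \to \widetilde{X}_A$ pulls $f$ back to a height function on $D$ that changes by exactly $\pm 1$ along every edge; being $1$--Lipschitz, its values lie in an interval of length $\preceq n$ containing $\{0,1\}$, and every $2$--cell of $D$ is a square whose four corners occupy three consecutive heights.

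The core of the argument is to \emph{verticalise} $D$: to replace it by a van Kampen diagram for $w$, still over $\mc{P}_A$, whose heights lie in a bounded range depending only on $\Delta$. Such a diagram is, after the evident translation, a van Kampen diagram over $\mc{P}_H$ for $w$. I would obtain it by repeatedly peeling off the topmost height of $D$ (and symmetrically the bottom). If $M = \max_D f$ exceeds the target range then, about each interior vertex $v$ of $D$ at height $M$, the link of $v$ is a circle mapping to a loop in the descending link of the image vertex in $\widetilde{X}_A$, and this descending link is a copy of the simply connected complex $\Delta$. Since the loop is null--homotopic in $\Delta$, the part of $D$ above height $M-1$ near $v$ may be excised and the resulting hole refilled by a diagram supported strictly below height $M$; doing this across the entire top slab of $D$ lowers $M$ by one. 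That the isoperimetric function of the fixed finite complex $\Delta$ is linear keeps each excision individually cheap.

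The step I expect to be the real obstacle is the area bookkeeping for this verticalisation. A naive estimate in which peeling one height off multiplies the area by a constant compounds to a useless exponential bound over the $\preceq n$ heights, and even sharper local estimates tend to compound, since the diagrams used to refill the holes reach a bounded distance below and so swell the next slab. What is required is an amortised analysis delivering a total created area of $\preceq n^4$. Heuristically, $D$ carries $\preceq n^2$ ``peaks'' distributed among $\preceq n$ heights; bringing a single peak down into the target range costs a vertical correction whose area is governed by the distortion of $H_\Delta$ in $A$ --- at worst quadratic in the peak's height, hence $\preceq n^2$ --- and the simple connectivity of $\Delta$ is exactly what guarantees that the corrections attached to neighbouring peaks can be chosen to fit together into a single diagram with no further loss. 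Summing $\preceq n^2$ corrections each of area $\preceq n^2$ yields a diagram over $\mc{P}_H$ for $w$ of area $\preceq n^4$, and hence $\delta(n) \preceq n^4$.
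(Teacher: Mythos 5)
Your overall strategy---fill the pushed-forward word over the CAT(0) group $A$ with a diagram of area $\preceq n^2$ and diameter $\preceq n$, then pay a per-cell price governed by the distortion of $H_\Delta$ in $A$---is the strategy of the paper, translated into van Kampen diagram language. But the step you flag as ``the real obstacle'' is a genuine gap, and the mechanism you propose for closing it (iteratively peeling the top height slab and refilling below) is the wrong one: as you yourself observe, the refilling diagrams spill a bounded distance downward, so each peel can inflate the next slab, and nothing in your sketch prevents the constants from compounding over the $\preceq n$ levels into an exponential bound. The assertion that ``the corrections attached to neighbouring peaks can be chosen to fit together into a single diagram with no further loss'' is precisely the statement that needs proof, and simple connectivity of $\Delta$ does not by itself deliver it in the peeling framework.

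The paper avoids iteration entirely. From an area-radius pair for a presentation of $A$ (quadratic area, linear radius) one gets, for the word in question, an expression as a product of $\preceq n^2$ conjugates $x_i z_i x_i^{-1}$ with $|x_i| \preceq n$; each such conjugate is rewritten \emph{in one step} (conjugate by $t^{-h(x_i)}$ and apply the substitution $t^k a t^{-k} \mapsto \Phi_k(a)$) as a conjugate of a relator $\Phi_k(z_i)$ of an infinite presentation $\mc{P}_H^\infty$ of $H_\Delta$, where the height index satisfies $|k| \leq |x_i| \preceq n$. This is the content of Theorem~\ref{thm2} (the area-penetration pair). Separately, Lemmas~\ref{lem4}--\ref{lem7} show that each indexed relator with index at most $n$ has area $\preceq n^2$ over the finite Dicks--Leary presentation; this is where the combinatorial null-homotopies in the simply connected $\Delta$ and the quadratic distortion enter, and it is the rigorous version of your ``each peak costs $\preceq n^2$.'' Because a product of conjugates of null-homotopic words has area at most the sum of the areas of the factors, the two bounds simply multiply, with no compounding: $\delta(n) \preceq n^2 \cdot n^2 = n^4$ by Proposition~\ref{prop5}. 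If you want to keep your diagrammatic picture, the moral is: do not lower the diagram level by level; instead connect each $2$-cell to the basepoint by a path of length $\preceq n$, replace the cell wholesale by its height-$k$ avatar, and fill those avatars independently over the finite presentation.
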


This result is sharp: there exist finitely presented Bestvina-Brady
groups whose Dehn functions are $\simeq n^4$ (see \cite{Brady1}).
Theorem \ref{thm1} provides an obstruction to the method suggested
in \cite{Brady1} for producing Bestvina-Brady groups whose Dehn
functions are similar to $n^k$ for arbitrary integers $k$.

A significant component of the proof of Theorem~\ref{thm1} is a
method for producing an isoperimetric function $f$ for a finitely
presented group $K$ from an isoperimetric function for a cyclic
extension of $K$. \textit{A priori}, the function $f$ will be an
isoperimetric function for a presentation of $K$ with infinitely
many relators.  We introduce the notion of area-penetration pairs to
deal with such non-finite presentations and show how they can be
used to derive an isoperimetric function for a finite presentation
of a group from an isoperimetric function for a presentation of the
group with infinitely many relators.

The organisation of this paper is as follows.  Section 2 begins with
the definitions of various filling invariants for finitely generated
groups; namely, isoperimetric and Dehn functions and area-radius
pairs.  We then introduce the new notions of area-penetration pairs
and relative area functions, which we will use to deal with
presentations with infinitely many relators.  In Section~3 we prove
a general result concerning the isoperimetric functions of cyclic
extensions. Theorem \ref{thm1} is proved as a corollary of this in
Section~4. Finally, in Section~5 we briefly recount the construction
due to Brady, Forester and Shankar of a finitely presented
Bestvina-Brady group with Dehn function $\simeq n^4$.

\section{Filling Functions}

In this section we define various filling invariants of groups and
give some of their basic properties.  Throughout $\mc{P} = \langle
\mc{A} \, | \, \mc{R} \rangle$ will be a presentation with $\mc{A}$
finite.

\subsection{Area Functions}

We recall the basic definitions concerning isoperimetric functions
for finitely generated groups.  For further background and a more
thorough exposition see, for example, \cite{brid02} or
\cite{Riley1}.  Note that the definitions given here are standard,
but we do not make the usual assumption that the presentations
involved have a finite number of relators.

Given a set $\mc{A}$, write $\mc{A}^{-1}$ for the set of formal
inverses of the elements of $\mc{A}$ and write $\mc{A}^{\pm1}$ for
the set $\mc{A} \cup \mc{A}^{-1}$.  Denote by $F(\mc{A})$ the free
group on the set $\mc{A}$ and by $\mc{A}^{\pm\bast}$ the free monoid
on the set $\mc{A}^{\pm1}$.  We refer to elements of
$\mc{A}^{\pm\bast}$ as words in the letters $\mc{A}^{\pm1}$ and
write $\emptyset$ for the empty word. The length of a word $w \in
\mc{A}^{\pm\bast}$ is written $|w|$. Given words $w_1, w_2 \in
\fm{A}$ we write $w_1 \FreeEq w_2$ if $w_1$ and $w_2$ are equal as
elements of $F(\mc{A})$ and $w_1 \equiv w_2$ if $w_1$ and $w_2$ are
equal as elements of $\fm{A}$.

\begin{defn} \label{def1}
  A word $w \in \mc{A}^{\pm\bast}$ is said to be
  \emph{null-homotopic} over $\mc{P}$ if it represents the
  identity in the group presented by $\mc{P}$.  A \emph{null-$\mc{P}$-expression} for such a
  word is a sequence $(x_i, r_i)_{i=1}^m$ in $\fm{A} \times
  \mc{R}^{\pm1}$ such that \[w \FreeEq \prod_{i=1}^m x_i r_i
  x_i^{-1}.\]  Define the \emph{area} of a null-$\mc{P}$-expression $\Sigma$, written $\Area
  \Sigma$, to be the integer $m$.  Define the \emph{$\mc{P}$-Area} of $w$,
  written $\Area_\mc{P}(w)$, to be the minimal area taken over all
  null-$\mc{P}$-expressions for $w$.

  The Dehn function of the presentation $\mc{P}$, written $\delta_\mc{P}$,
  is defined to be the function $\N \rightarrow \N$ given by \[\delta_\mc{P}(n)
  = \max \{ \Area_\mc{P}(w) \, : \, \text{$w \in \fm{A}$, $w$ null-homotopic,
  $|w| \leq n$} \}.\]
\end{defn}

Although the Dehn functions of different \emph{finite} presentations
of a fixed group may differ, their asymptotic behaviour will be the
same.  This is made precise in the following way.

\begin{defn}
  Given functions $f, g : \N \rightarrow \N$ write $f \preceq g$ if
  there exists a constant $C > 0$ such that $f(n) \leq Cg(Cn + C) +
  Cn + C$ for all $n$.  Write $f \simeq g$ if $f \preceq g$ and
  $g \preceq f$.
\end{defn}

If $\mc{P}_1$ and $\mc{P}_2$ are finite presentations of the same
group then $\delta_{\mc{P}_1} \simeq \delta_{\mc{P}_2}$ (see, e.g.,
\cite{brid02}).

\begin{defn}
  A function $f : \N \rightarrow \N$ is an \emph{isoperimetric
  function} for a group $G$ if $\delta_\mc{Q} \preceq f$ for some (and hence
  any) finite presentation $\mc{Q}$ of $G$.
\end{defn}

\begin{defn}
  A \emph{null-$\mc{P}$-scheme} for a null-homotopic word $w \in
  \mc{A}^{\pm\bast}$ is a sequence $w \equiv w_0 \rightsquigarrow w_1
  \rightsquigarrow \ldots \rightsquigarrow w_n \equiv \emptyset$ of words in
  $\mc{A}^{\pm\bast}$ such that each $w_i w_{i+1}^{-1}$ is
  null-homotopic.  The \emph{$\mc{P}$-Cost} of each transition $w_i
  \rightsquigarrow w_{i+1}$ is the $\mc{P}$-Area of the word $w_i
  w_{i+1}^{-1}$.
\end{defn}

Note that the sum of the costs of the transitions in a
null-$\mc{P}$-scheme gives an upper bound on the area of the word
$w$.

\subsection{Area-Radius pairs}

\begin{defn}
  Define the \emph{radius} of a null-$\mc{P}$-expression $\Sigma = (x_i,
  r_i)_{i=1}^m$, written $\Rad \Sigma$, to be $\max_{i=1}^m
  |x_i|$.  A pair $(\alpha, \rho)$ of functions $\alpha, \rho : \N \rightarrow \N$
  is said to be an \emph{area-radius pair} for the presentation $\mc{P}$ if
  for all null-homotopic words $w \in \fm{A}$
  with $|w| \leq n$ there exists a null-$\mc{P}$-expression $\Sigma$ with $\Area \Sigma
  \leq \alpha(n)$ and $\Rad \Sigma \leq \rho(n)$.
\end{defn}

The following result shows how area-radius pairs transform under
change of presentation.

\begin{prop} \label{prop4}
  Let $\mc{P}$ and $\mc{Q}$ be finite presentations of the same
  group.  If $(\alpha, \rho)$ is an area-radius pair for $\mc{P}$
  then there exists an area-radius pair $(\alpha', \rho')$ for
  $\mc{Q}$ with $\alpha \simeq \alpha'$ and $\rho \simeq \rho'$.
\end{prop}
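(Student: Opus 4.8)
The plan is to transfer null-$\mathcal{P}$-expressions to null-$\mathcal{Q}$-expressions using the standard machinery for proving that finite presentations of the same group have equivalent Dehn functions, while additionally tracking the radius. Fix finite presentations $\mathcal{P} = \langle \mathcal{A} \mid \mathcal{R} \rangle$ and $\mathcal{Q} = \langle \mathcal{B} \mid \mathcal{S} \rangle$ of a group $G$. First I would choose, for each generator $b \in \mathcal{B}$, a word $\widehat{b} \in \mathcal{A}^{\pm\bast}$ representing the same element of $G$, and extend this to a monoid homomorphism $\varphi : \mathcal{B}^{\pm\bast} \to \mathcal{A}^{\pm\bast}$; symmetrically choose $\psi : \mathcal{A}^{\pm\bast} \to \mathcal{B}^{\pm\bast}$. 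Since $\mathcal{A}$, $\mathcal{B}$, $\mathcal{R}$, $\mathcal{S}$ are all finite there is a constant $L$ bounding the lengths of all the $\widehat{b}$, the images $\varphi(s)$ for $s \in \mathcal{S}^{\pm 1}$, and similarly on the other side; in particular $|\varphi(w)| \leq L|w|$ for every word $w$.

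Given a null-homotopic word $w \in \mathcal{B}^{\pm\bast}$ with $|w| \leq n$, the word $\varphi(w)$ is null-homotopic over $\mathcal{P}$ and has length at most $Ln$, so by hypothesis it admits a null-$\mathcal{P}$-expression $\Sigma = (x_i, r_i)_{i=1}^m$ with $m \leq \alpha(Ln)$ and $\max_i |x_i| \leq \rho(Ln)$. Applying $\psi$ to the identity $\varphi(w) \FreeEq \prod_{i=1}^m x_i r_i x_i^{-1}$ gives $\psi(\varphi(w)) \FreeEq \prod_{i=1}^m \psi(x_i)\, \psi(r_i)\, \psi(x_i)^{-1}$ in $F(\mathcal{B})$. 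Now $\psi(r_i)$ is a null-homotopic word over $\mathcal{Q}$ of length $\leq L\cdot(\max|r|)$, hence of bounded $\mathcal{Q}$-Area, say at most $D$, via a fixed null-$\mathcal{Q}$-expression all of whose conjugating elements have length at most some constant $E$ (again using finiteness of $\mathcal{R}$ and $\mathcal{S}$). Substituting these in expresses $\psi(\varphi(w))$ as a product of at most $Dm$ conjugates of elements of $\mathcal{S}^{\pm 1}$, where each conjugating word has length at most $|\psi(x_i)| + E \leq L\rho(Ln) + E$. Finally, the word $w\,\psi(\varphi(w))^{-1}$ is null-homotopic over $\mathcal{Q}$ with length $\leq n + Ln$, so it has bounded $\mathcal{Q}$-Area and bounded radius in terms of $n$ via, e.g., its own Dehn-type bound over $\mathcal{Q}$ — but to keep the radius linear one instead writes $w \FreeEq \psi(\varphi(w)) \cdot \big(\psi(\varphi(w))^{-1} w\big)$ and fills the second factor by the classical ``$w$ equals $\psi\varphi(w)$ up to a bounded-length correction at each letter'' telescoping argument, which produces a null-$\mathcal{Q}$-expression for $w\,\psi(\varphi(w))^{-1}$ of area $O(n)$ and radius $O(n)$. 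Concatenating, we obtain a null-$\mathcal{Q}$-expression for $w$ of area $O(\alpha(Ln)) + O(n)$ and radius $O(\rho(Ln)) + O(n)$, which gives the desired $(\alpha', \rho')$ with $\alpha' \simeq \alpha$ and $\rho' \simeq \rho$; the reverse comparison follows by symmetry.

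The main obstacle is the radius bookkeeping in the last step: naively filling $w\,\psi(\varphi(w))^{-1}$ using an arbitrary isoperimetric inequality for $\mathcal{Q}$ could inflate the radius superlinearly, so one must use the explicit telescoping fill (inserting, between consecutive letters, the bounded-length relations that witness $b = \psi(\widehat{b}\,)$ freely equals $b$) to guarantee that every conjugating word arising there has length $O(n)$. Once that is in place, the area and radius of the final null-$\mathcal{Q}$-expression are each dominated by the corresponding quantity for $\Sigma$ composed with an affine reparametrisation of the length, which is exactly what $\simeq$ allows, so the bound $\alpha' \simeq \alpha$, $\rho' \simeq \rho$ follows. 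I would also remark that this is the natural ``with radius'' refinement of the proof that $\delta_{\mathcal{P}_1} \simeq \delta_{\mathcal{P}_2}$ cited after Definition~\ref{def1}, and indeed specialises to it on forgetting the radius component.
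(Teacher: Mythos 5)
Your argument is correct, but it is organised differently from the paper's. The paper reduces to the case where $\mc{P}$ and $\mc{Q}$ differ by a single Tietze transformation and then treats the four elementary cases one at a time (add/remove a redundant relator, add/remove a generator with its defining relation), tracking area and radius explicitly in each. You instead run the direct ``change of generators'' argument in one pass: translate $w$ into $\mc{A}^{\pm\bast}$ via $\varphi$, fill there using $(\alpha,\rho)$, push the filling back via $\psi$ while replacing each $\psi(r_i)$ by a fixed bounded null-$\mc{Q}$-expression, and close up with a telescoped filling of the collar $w\,\psi(\varphi(w))^{-1}$. The ingredients are the same ones the paper uses piecemeal --- your substitution step is its Case 2, your $\psi\circ\varphi$ retraction and collar telescoping correspond to its Cases 3 and 4 --- and you correctly identify and handle the one genuine danger point, namely that filling the collar with an arbitrary isoperimetric bound would destroy the linear radius control, so the explicit prefix-conjugated telescoping is needed there. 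The trade-off: the paper's route needs no collar argument at all (each elementary move changes only one generator or relator, so the bookkeeping in each case is short and the constants are transparent), at the cost of invoking the fact that finite presentations of the same group are connected by finitely many Tietze moves; your route avoids that reduction and exhibits the comparison maps explicitly, at the cost of the more delicate radius bookkeeping in the final step. Both yield $\alpha'\simeq\alpha$ and $\rho'\simeq\rho$, so the proposal stands as a valid alternative proof.
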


\begin{proof}
  Since $\mc{P}$ can be converted to $\mc{Q}$ by a finite sequence
  of Tietze transformations, it suffices to prove the proposition in
  the situation that $\mc{P}$ and $\mc{Q}$ are related by a single
  such transformation.  There are four cases to consider.

  \textbf{Case 1.}  Suppose that $\mc{P} = \langle \mc{A} \, | \,
  \mc{R} \rangle$ and $\mc{Q} = \langle \mc{A} \, | \, \mc{R}, s
  \rangle$ where $s \in \fm{A}$ is null-homotopic over $\mc{P}$.
  A null-$\mc{P}$-expression for a word $w \in \fm{A}$ is
  also a null-$\mc{Q}$-expression for $w$, so $(\alpha, \rho)$ is
  itself an area-radius pair for $\mc{Q}$.

  \textbf{Case 2.}  Suppose that $\mc{P} = \langle \mc{A} \, | \, \mc{R},
  s \rangle$ and $\mc{Q} = \langle \mc{A} \, | \, \mc{R} \rangle$ where
  $s \in \fm{A}$ is null-homotopic over $\mc{Q}$.  Let $(x_i, r_i)$
  be a null-$\mc{Q}$-expression for $s$ with area $M$ and radius
  $K$.  If $w \in \fm{A}$ is a null-homotopic word of length at most
  $n$ then there exists a null-$\mc{P}$-expression $\Sigma = (y_i,
  z_i)_{i=1}^L$ for $w$ with area $L \leq \alpha(n)$ and radius at
  most $\rho(n)$.  Substituting $\prod_{i=1}^M x_i r_i x_i^{-1}$ for
  each occurrence of $s$ in the product $\prod_{i=1}^L y_i z_i
  y_i^{-1}$ gives a product which is freely equal to $w$ in
  $F(\mc{A})$.  The corresponding null-$\mc{Q}$-expression has area
  at most $ML$ and radius at most $\rho(n) + K$.  Thus $(L
  \alpha(n), \rho(n) +K)$ is an area-radius pair for
  $\mc{Q}$.

  \textbf{Case 3.}  Suppose that $\mc{P} = \langle \mc{A} \, | \mc{R}
  \rangle$ and $\mc{Q} = \langle \mc{A}, b \, | \, \mc{R}, bu_b^{-1}
  \rangle$ where $u_b \in \fm{A}$ and $b u_b^{-1}$ is null-homotopic
  over $\mc{P}$.  Define $K = |u_b|$.  Suppose $w \in (\mc{A} \cup
  \{b\})^{\pm\bast}$ is a null-homotopic word of length at most $n$;
  say $w \equiv v_0 b^{\epsilon_1} v_1 \ldots b^{\epsilon_L} v_L$
  for some $v_i \in \fm{A}$ and $\epsilon_i \in
  \{\pm1\}$.  Insert cancelling pairs $u_b^{-1} u_b$ into $w$ to
  obtain the word $w' \equiv v_0 (b u_b^{-1} u_b)^{\epsilon_1} v_1
  \ldots (b u_b^{-1} u_b)^{\epsilon_L} v_L$ with $w' \FreeEq w$.
  Define $v_0', \ldots, v_L'$ to be the words in $\fm{A}$ such that
  $w' \equiv v_0' (b u_b^{-1})^{\epsilon_1} v_1'
  \ldots (b u_b^{-1})^{\epsilon_L} v_L'$ and note that $\sum_{i=1}^L
  |v_i'| \leq K|w| \leq Kn$.  For each $i \in \{0, \ldots, L\}$
  define $\tau_i \equiv v_i' v_{i+1}' \ldots v_L'$.  Then \[w'
  \FreeEq  \tau_0 \prod_{i=1}^L \tau_i^{-1} (b u_b)^{\epsilon_i}
  \tau_i\] and $|\tau_i| \leq \sum_{i=1}^L |v_i'| \leq Kn$.  The word $\tau_0$
  is null-homotopic over
  $\mc{Q}$ and hence over $\mc{P}$ and so there exists a
  null-$\mc{P}$-expression $(x_i, r_i)_{i=1}^M$ for $\tau_0$ with area
  at most $\alpha(Kn)$ and radius at most $\rho(Kn)$.  Thus \[w
  \FreeEq \prod_{i=1}^M x_i r_i x_i^{-1} \prod_{i=1}^L \tau_i^{-1} (b
  u_b^{-1})^{\epsilon_i} \tau_i\] and so we obtain a null-$\mc{Q}$-expression
  for $w$ with area at most $M + L \leq \alpha(Kn) +
  n$ and radius at most $\max \{ \max_i |x_i|, \max_i|v_i'| \} \leq
  \max\{\rho(Kn), Kn\} \leq \rho(Kn) + Kn$.  Thus $(\alpha(Kn) + n,
  \rho(Kn) + Kn)$ is an area-radius pair for $\mc{Q}$.

  \textbf{Case 4.}  Suppose that $\mc{P} = \langle \mc{A}, b \, | \, \mc{R}, bu_b^{-1}
  \rangle$ and $\mc{Q} = \langle \mc{A} \, | \mc{R}
  \rangle$ where $u_b \in \fm{A}$ and $b u_b^{-1}$ is null-homotopic
  over $\mc{Q}$.  Define $K = |u_b|$.  Consider the retraction $\pi : (\mc{A} \cup
  \{b\})^{\pm\bast} \rightarrow \fm{A}$ which is the identity on
  $\mc{A}$ and maps $b^{\pm1} \mapsto u_b^{\pm1}$.  Note that $\pi$ induces a
  retraction $F(\mc{A} \cup \{b\}) \rightarrow F(\mc{A})$.  Suppose $w
  \in \fm{A}$ is a null-homotopic word of length at most $n$ and let
  $(x_i, z_i)_{i=1}^M$ be a null-$\mc{P}$-expression for $w$ with
  area at most $\alpha(n)$ and radius at most $\rho(n)$.  Let $S$ be
  the subset of $\{1, \ldots, m\}$ consisting of those $i$ for which
  $z_i \in \mc{R}^{\pm1}$.  Then $(\pi(x_i), \pi(z_i))_{i \in S}$ is a
  null-$\mc{Q}$-expression for $w$ with area at most $M$ and radius
  at most $K \rho(n)$.  Thus $(\alpha(n), K\rho(n))$ is an area-radius pair for $\mc{Q}$.
\end{proof}

\subsection{Changing Between Infinite Presentations} \label{sec2}

Up to this point, all the definitions of this section have been
standard; we now introduce something new.  We saw above that the
Dehn functions of all finite presentations of a fixed group have the
same asymptotic behaviour.  This is not true, however, for
presentations with an infinite number of relators, where the
behaviour of the Dehn function may vary markedly.  Indeed, for any
group if we take the set of relators to be the set of all
null-homotopic words then we obtain a presentations whose Dehn
function is constant.  In order to regain some control over how the
Dehn function changes when changing between (possibly non-finite)
presentations, we introduce the following notions.

\begin{defn} \label{def2}
  An \emph{index} on a set $\mc{X}$ is a function $\| \cdot \| :
  \mc{X} \rightarrow \N$.  This is extended to an index on the set
  $\mc{X}^{\pm1}$ by setting $\| x^{-1} \| = \| x \|$.  An
  \emph{indexed presentation} is a pair $(\mc{P}, \| \cdot \|)$
  where $\mc{P} = \langle \mc{A} \, | \, \mc{R} \rangle$ is a presentation
  and $\| \cdot \|$ is an index on $\mc{R}$.

  Let $(\mc{P}, \| \cdot \|)$ be an indexed presentation whose set of
  generators $\mc{A}$ is finite.  A pair $(\alpha, \pi)$ of
  functions $\alpha, \pi : \N \rightarrow \N$ is said to be an \emph{
  area-penetration pair} for $(\mc{P}, \| \cdot \|)$ if for all null-homotopic words
  $w \in \mc{A}^{\pm\bast}$ with $|w| \leq n$ there exists a
  null-$\mc{P}$-expression $(x_i, r_i)_{i=1}^m$ for $w$ with area
  $m \leq \alpha(n)$ and with $\|r_i\| \leq \pi(n)$ for each $i$.

  Given $\mc{X} \subseteq \fm{A}$ write $\llangle \mc{X} \rrangle$ for the
  normal closure of the image of $\mc{X}$ in $F(\mc{A})$.  Let $\mc{S}
  \subseteq \fm{A}$ be a set of words with $\llangle \mc{S} \rrangle = \llangle
  \mc{R} \rrangle$.  Then $\mc{Q} = \langle \mc{A} \, | \, \mc{S} \rangle$ presents
  the same group as $\mc{P}$.  The \emph{relational area
  function} of $(\mc{P}, \| \cdot \|)$ over
  $\mc{Q}$ is defined to be the function $\N \rightarrow \N \cup \{\infty\}$ given
  by \[\RArea (n) = \max \{ \Area_\mc{Q} (r) \,
  : \, r \in \mc{R}, \|r\| \leq n \}.\]
\end{defn}

\begin{prop} \label{prop5}
  Let $(\mc{P}, \| \cdot \|)$ and $\mc{Q}$ be as in Definition~\ref{def2}.  Let $(\alpha, \pi)$ be an area-penetration
  pair for $(\mc{P}, \| \cdot \|)$ and let $\RArea$ be the relational
  area function of $(\mc{P}, \| \cdot \|)$ over
  $\mc{Q}$.  Then the Dehn function $\delta_\mc{Q}$ of the
  presentation $\mc{Q}$ satisfies \[\delta_\mc{Q}(n) \leq
  \alpha(n) \RArea(\pi(n)).\]
\end{prop}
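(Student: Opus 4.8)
The plan is to unwind the definitions: take an arbitrary null-homotopic word $w \in \mc{A}^{\pm\bast}$ with $|w| \leq n$, use the area-penetration pair to write $w$ as a product of conjugates of few relators of $\mc{P}$ each of small index, and then replace every such relator by a null-$\mc{Q}$-expression whose size is controlled by $\RArea$.

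Concretely, first I would invoke the area-penetration pair $(\alpha,\pi)$ to obtain a null-$\mc{P}$-expression $(x_i,r_i)_{i=1}^m$ for $w$ with $m \leq \alpha(n)$ and $\|r_i\| \leq \pi(n)$ for every $i$, so that $w \FreeEq \prod_{i=1}^m x_i r_i x_i^{-1}$ in $F(\mc{A})$. Next, for each $i$ I observe that $\Area_\mc{Q}$ is invariant under inversion of its argument: reversing the order of a null-$\mc{Q}$-expression for a word $r$ and inverting each of its relators produces a null-$\mc{Q}$-expression of the same area for $r^{-1}$. Since $r_i \in \mc{R}^{\pm1}$, its inverse (or $r_i$ itself) lies in $\mc{R}$ and has index at most $\pi(n)$, so the definition of the relational area function gives $\Area_\mc{Q}(r_i) \leq \RArea(\pi(n))$. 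Hence for each $i$ there is a null-$\mc{Q}$-expression $(y_{ij}, s_{ij})_{j=1}^{m_i}$ for $r_i$ with $m_i \leq \RArea(\pi(n))$.

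Substituting $\prod_{j=1}^{m_i} y_{ij} s_{ij} y_{ij}^{-1}$ for each occurrence of $r_i$ and then conjugating by $x_i$ yields
\[ w \FreeEq \prod_{i=1}^m \prod_{j=1}^{m_i} (x_i y_{ij})\, s_{ij}\, (x_i y_{ij})^{-1}, \]
which is a null-$\mc{Q}$-expression for $w$ of area $\sum_{i=1}^m m_i \leq m\,\RArea(\pi(n)) \leq \alpha(n)\RArea(\pi(n))$. Thus $\Area_\mc{Q}(w) \leq \alpha(n)\RArea(\pi(n))$ for every null-homotopic $w$ with $|w| \leq n$, which is exactly the asserted bound on $\delta_\mc{Q}(n)$ (the inequality being vacuous whenever $\RArea(\pi(n)) = \infty$). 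I do not expect a genuine obstacle here: the argument is a direct substitution, and the only points requiring care are the inversion-invariance of $\Area_\mc{Q}$ (so that relators in $\mc{R}^{\pm1}$ rather than $\mc{R}$ cause no trouble) and checking that the conjugators $x_i y_{ij}$ assemble correctly under the substitution. The real content of the proposition is precisely this bookkeeping, which converts a filling over the possibly infinite presentation $\mc{P}$ into one over $\mc{Q}$.
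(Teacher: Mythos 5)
Your proof is correct, and it is precisely the direct substitution argument that the paper describes as straightforward and omits; the one point genuinely needing care (that $\RArea$ is defined over $r \in \mc{R}$ while the $r_i$ lie in $\mc{R}^{\pm1}$, handled by the inversion-invariance of $\Area_\mc{Q}$) is addressed correctly.
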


Since the proof of this result is straightforward we omit it.

\section{Isoperimetric Functions for Cyclic Extensions}

Let $K \lhd \Gamma$ be a pair of finitely presented groups with
$\Gamma / K \cong \Z$.  In this section we show how a presentation
$\mc{P}_\Gamma$ of $\Gamma$ gives rise to an infinite presentation
$\mc{P}_K^\infty$ for $K$.  The relators of $\mc{P}_K^\infty$ come
equipped with an index $\| \cdot \|$ and we prove that an
area-radius pair for $\mc{P}_\Gamma$ is actually an area-penetration
pair for $(\mc{P}_K^\infty, \| \cdot \|)$.

Let $\mc{A}$ be a finite generating set for $K$.  Choose an element
$t$ of $\Gamma$ whose image generates $\Gamma / K \cong \Z$ and let
$\theta$ be the automorphism of $K$ induced by conjugation by $t$.
Let $\mathcal{P}_K = \langle \mathcal{A} \, | \, \mathcal{R}
\rangle$ be a presentation for $K$ and for each $a \in \mathcal{A}$
let $w_a$ be a word in $\mathcal{A}^{\pm\bast}$ representing
$\theta(a)$.  Define $\mathcal{S}$ to be the set of words $\{
tat^{-1}w_a^{-1} \, : \, a \in \mathcal{A} \}$ and let
$\mc{P}_\Gamma$ be the presentation $\langle \mc{A}, t \, | \,
\mc{R}, \mc{S} \rangle$ of $\Gamma$.

For each $k \in \Z$, let $\Phi_k : \mc{A}^{\pm\bast} \rightarrow
\mc{A}^{\pm\bast}$ be an endomorphism lifting $\theta^k : K
\rightarrow K$ which commutes with the inversion involution of
$\mc{A}^{\pm\bast}$.  We take $\Phi_0$ to be the identity.  Define
the following collections of words in $\fm{A}$:
\begin{align*}\overline{\mc{R}} &= \{ \Phi_k(r) \, : \, r \in \mc{R}, k \in
\Z \}\\
\overline{\mc{S}} &= \{ \Phi_{k+1}(a) \Phi_k(w_a)^{-1} \, : \, a \in
\mc{A}, k \in \Z\}.\end{align*}  Note that each word in
$\overline{\mc{R}} \cup \overline{\mc{S}}$ represents the identity
in $K$.  Since $\mc{R} \subseteq \overline{\mc{R}}$, the
presentation $\mc{P}_K^\infty = \langle \mc{A} \, | \, \ol{\mc{R}},
\ol{\mc{S}} \rangle$ presents $K$. Define an index $\| \cdot \|$ on
$\overline{\mc{R}} \cup \overline{\mc{S}}$ by setting $\|\omega \|$
to be the minimal value of $|k|$ such that either $\omega \equiv
\Phi_k(r)$ for some $r \in \mc{R}$ or $\omega \equiv \Phi_{k+1}(a)
\Phi_k(w_a)^{-1}$ for some $a \in \mc{A}$.

The following theorem is the principal result of this section.  The
reader may find it instructive to translate the given proof into the
language of either van Kampen diagrams (see, e.g., \cite{brid02}) or
pictures (see, e.g., \cite{Fenn1}) where the ideas involved are
perhaps more intuitive.

\begin{thm} \label{thm2}
  If $(\alpha, \rho)$ is an area-radius pair for
  $\mc{P}_\Gamma$ then it is also an area-penetration pair
  for the indexed presentation $(\mc{P}_K^\infty, \| \cdot \|)$.
\end{thm}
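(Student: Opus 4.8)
The plan is to convert a null-$\mc{P}_\Gamma$-expression for $w$ supplied by the area-radius pair into a null-$\mc{P}_K^\infty$-expression by an explicit letter-by-letter rewriting that ``combs out'' all occurrences of $t^{\pm1}$; the rewriting turns each conjugate of a $\mc{P}_\Gamma$-relator into a single conjugate of an $\ol{\mc{R}}$- or $\ol{\mc{S}}$-relator of controlled index, so that neither the number of conjugates nor the conjugator lengths increase. The reader should think of this as pushing every $\mc{A}$-edge of a van Kampen diagram over $\mc{P}_\Gamma$ for $w$ down to ``$t$-level $0$'': because the $t$-edges of such a diagram organise into corridors and every relator of $\mc{P}_\Gamma$ has $t$-exponent sum zero, each vertex acquires a well-defined $t$-level, and pushing an $\mc{A}$-edge at level $k$ down to level $0$ is exactly the effect of applying $\Phi_k$.

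First I take $w \in \fm{A}$ null-homotopic over $K$ with $|w| \le n$; it is then null-homotopic over $\Gamma$, so there is a null-$\mc{P}_\Gamma$-expression $(y_j, s_j)_{j=1}^M$ for $w$ with $M \le \alpha(n)$, $|y_j| \le \rho(n)$ and $s_j \in (\mc{R} \cup \mc{S})^{\pm 1}$. Write $W \equiv \prod_{j=1}^M y_j s_j y_j^{-1}$, so that $W \FreeEq w$ in $F(\mc{A} \cup \{t\})$. For a word $u$ over $\mc{A}^{\pm1}\cup\{t^{\pm1}\}$ let $\sigma(u) \in \Z$ be the exponent sum of $t$ in $u$, and for $d \in \Z$ define $\Psi_d(u) \in \fm{A}$ by scanning $u$ from the left while carrying an integer initialised to $d$: a letter $a^{\pm 1}$ with $a \in \mc{A}$ is replaced by $\Phi_k(a^{\pm 1})$ where $k$ is the current value of the carried integer, and a letter $t^{\pm 1}$ is deleted after adding $\pm 1$ to the carried integer. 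Put $\Psi = \Psi_0$. Immediately from the definition, $\Psi_d(uv) \equiv \Psi_d(u)\,\Psi_{d+\sigma(u)}(v)$ for all $u,v$.

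The argument then rests on three facts about $\Psi$. First, $\Psi$ is constant on free-equality classes: if $u \FreeEq u'$ then $\Psi(u) \FreeEq \Psi(u')$ in $F(\mc{A})$; it suffices to check the insertion of one cancelling pair, and inserting $t^{\pm1} t^{\mp1}$ leaves $\Psi$ literally unchanged while inserting $a^{\pm 1} a^{\mp 1}$ with $a \in \mc{A}$ inserts $\Phi_k(a)^{\pm 1} \Phi_k(a)^{\mp 1}$, which reduces freely. Second, $\Psi_{\sigma(y)}(y^{-1}) \equiv \Psi(y)^{-1}$ for every such $y$, by a mirror-symmetry argument (scanning forward through $y$ and then back through $y^{-1}$, the carried integer takes the same value just before each mirrored letter), and $\Psi(v) \equiv v$ for $v \in \fm{A}$ since then $\Phi_0 = \mathrm{id}$ is applied throughout. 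Third, for each $d \in \Z$ one has $\Psi_d(r^{\pm 1}) \equiv \Phi_d(r)^{\pm 1} \in \ol{\mc{R}}^{\pm 1}$ for $r \in \mc{R}$, and $\Psi_d\big((t a t^{-1} w_a^{-1})^{\pm 1}\big) \equiv \big(\Phi_{d+1}(a)\Phi_d(w_a)^{-1}\big)^{\pm 1} \in \ol{\mc{S}}^{\pm 1}$ for $a \in \mc{A}$; in either case the relator produced has index at most $|d|$.

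Finally I apply $\Psi$ to $W$. Since $\sigma(s_j) = 0$ and $\sigma(y_j s_j y_j^{-1}) = 0$ for every $j$, iterating the factorisation $\Psi_d(uv)\equiv\Psi_d(u)\Psi_{d+\sigma(u)}(v)$ and then using the second fact gives
\[
\Psi(W) \;\equiv\; \prod_{j=1}^{M} \Psi(y_j)\,\Psi_{\sigma(y_j)}(s_j)\,\Psi_{\sigma(y_j)}(y_j^{-1}) \;\equiv\; \prod_{j=1}^{M} x_j\, r_j\, x_j^{-1},
\]
with $x_j \equiv \Psi(y_j) \in \fm{A}$ and $r_j \equiv \Psi_{\sigma(y_j)}(s_j)$. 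By the third fact, $r_j \in (\ol{\mc{R}} \cup \ol{\mc{S}})^{\pm 1}$ with $\|r_j\| \le |\sigma(y_j)| \le |y_j| \le \rho(n)$, and combining $W \FreeEq w$ with the first fact and $\Psi(w) \equiv w$ yields $\prod_j x_j r_j x_j^{-1} \FreeEq w$ in $F(\mc{A})$. Thus $(x_j, r_j)_{j=1}^M$ is a null-$\mc{P}_K^\infty$-expression for $w$ of area $M \le \alpha(n)$ all of whose relators have index at most $\rho(n)$, which is exactly the assertion. I expect the first fact to be the main obstacle: $\Psi$ is \emph{not} a monoid homomorphism, so its invariance under free reduction has to be verified directly, the saving grace being that the discrepancy a free cancellation introduces is itself freely trivial; the rest is routine bookkeeping with the scanning procedure.
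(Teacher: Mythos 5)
Your proof is correct and takes essentially the same route as the paper: both comb the $t$-letters out of a null-$\mc{P}_\Gamma$-expression by assigning each $\mc{A}$-letter its $t$-level $k$ and applying $\Phi_k$, i.e.\ by the substitution $t^k a t^{-k} \mapsto \Phi_k(a)$, which sends conjugates of $\mc{R}\cup\mc{S}$ relators to conjugates of $\ol{\mc{R}}\cup\ol{\mc{S}}$ relators of index bounded by the radius. The only difference is presentational: the paper obtains invariance under free equality for free by noting that $\ker\big(F(\mc{A}\cup\{t\})\to\Z\big)$ is free on $\{t^kat^{-k}\}$ and defining the substitution as a homomorphism on that basis, whereas your scanning map is the same rewriting with that invariance checked by hand.
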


\begin{proof}
  Let $w \in \fm{A}$ be a null-homotopic word of length at most $n$
  and let $(x_i, z_i)_{i=1}^m$ be a null-$\mc{P}_\Gamma$-expression
  for $w$ with $m \leq \alpha(n)$ and with $|x_i| \leq \rho(n)$ for
  each $i$.

  We write $h(u)$ for the exponent sum in the letter $t$ of a word
  $u \in (\mc{A} \cup \{t\})^{\pm\bast}$ and define $\widetilde{N}$
  to be the submonoid of $(\mc{A} \cup \{t\})^{\pm\bast}$ consisting
  of all those words $u$ with $h(u) = 0$.  Define $\mc{X}$ to be the set of
  words $\{t^k a t^{-k} \, : \, a \in \mc{A}, k \in \Z \} \leq (\mc{A} \cup
  \{t\})^{\pm\bast}$.  Let $L$
  be the submonoid of $\widetilde{N}$ generated by $\mc{X}^{\pm1}$
  and note that $L$ is free on this basis.
  If $u \in \widetilde{N}$ write
  $\Lambda(u)$ for the unique word in $L$ which is freely equal to $u$ in
  $F(\mc{A} \cup \{t\})$ and freely reduced as an element of $F(\mc{X})$.  For
  each $i \in \{1, \ldots, m\}$, define
  $\ol{x}_i \equiv \Lambda(x_i t^{-h(x_i)})$ and $\ol{z}_i =
  \Lambda(t^{h(x_i)} z_i t^{-h(x_i)})$.  Define $\sigma \equiv
  \prod_{i=1}^m \ol{x}_i \ol{z}_i \ol{x}_i^{-1}$ and note that $w
  \FreeEq \sigma$ in $F(\mc{A} \cup \{t\})$.

  Define a homomorphism $\Psi : L \rightarrow \fm{A}$, which
  commutes with the inversion involution of $L$, by mapping $t^k a t^{-k}
  \mapsto \Phi_k(a)$.  Let $N$ be the kernel of the homomorphism
  $F(\mc{A} \cup \{t\}) \rightarrow \Z$ defined by mapping $t$ to $1$ and each
  $a \in \mc{A}$ to $0$, and note that $N$ is free with basis the image of
  $\mc{X}$.  Thus $\Psi$ descends to a homomorphism $N \rightarrow
  F(\mc{A})$ and since $w \FreeEq \sigma$ in $N$ we have $\Psi(w) \FreeEq
  \Psi(\sigma)$ in $F(\mc{A})$.  Observe that $\Psi(\sigma) \equiv
  \prod_{i=1}^m \Psi(\ol{x}_i) \Psi(\ol{z}_i) \Psi(\ol{x}_i)^{-1}$
  and $\Psi(w) \equiv w$ since $w$ contains no occurrence of the letter $t$.

  If $z_i \equiv a_1 \ldots a_l \in \mc{R}$ then $\ol{z}_i \equiv
  t^k a_1 t^{-k} \ldots t^k a_l t^{-k}$ for some $k \in \Z$ with $|k| = |h(x_i)| \leq
  |x_i|$.  Thus $\Psi(\ol{z}_i) \equiv \Phi_k(z_i)$ where $|k| \leq
  \rho(n)$.  If $z_i \equiv t a t^{-1} a_1 \ldots a_l \in \mc{S}$
  then $\ol{z}_i \equiv t^{k+1} a t^{-k-1} t^k a_1 t^{-k} \ldots
  t^{k} a_l t^{-k}$ for some $k \in \Z$ with $|k| = |h(x_i)| \leq |x_i|$.  Thus
  $\Psi(\ol{z}_i) \equiv \Phi_{k+1}(a) \Phi_k(w_a)^{-1}$ where
  $|k| \leq \rho(n)$.  In either case we
  have $\Psi(\ol{z}_i) \in \ol{\mc{R}} \cup \ol{\mc{S}}$ and
  $\|\Psi(\ol{z}_i)\| \leq \rho(n)$.  Thus $(\Psi(\ol{x}_i),
  \Psi(\ol{z}_i))_{i=1}^m$ is a null-$\mc{P}_K^{\infty}$-expression
  for $w$ and, since $w$ was arbitrary, we see that $(\alpha, \rho)$ is
  an area-penetration pair for $\mc{P}_K^{\infty}$.
\end{proof}

\section{Proof of Theorem \ref{thm1}} \label{sec1}

Recall from the introduction that $\Delta$ is a finite, flag
simplicial complex defining a right-angled Artin group $A$ with
standard presentation $\mc{P}_A$.  The Bestvina-Brady subgroup of
$A$ is defined to be the kernel $H_\Delta$ of the homomorphism $A
\rightarrow \Z = \langle t \rangle$ which maps each of the
generators of $\mc{P}_A$ to $t$.  The group $H_\Delta$ is finitely
presented if and only if $\Delta$ is simply connected \cite{Bestvina1};
we now describe such a presentation.

Let $\Edge(\Delta)$ be the set of directed edges of $\Delta$ (so the
cardinality of $\Edge(\Delta)$ is twice the number of $1$-simplices
in $\Delta$).  We write $\iota e$ and $\tau e$ respectively for the
initial and terminal vertices of $e$ and we write $\overline{e}$ for
the edge $e$ with the opposite orientation.  We say that the
directed edges $e_1, \ldots, e_n$ form a combinatorial path in
$\Delta$, written $e_1 \cdot \ldots \cdot e_n$, if $\tau e_i = \iota
e_{i+1}$ for all $i$. If furthermore $\tau e_n = \iota e_1$ then we
say that $e_1 \cdot \ldots \cdot e_n$ is a combinatorial $1$-cycle.

In \cite{Dicks1} Dicks and Leary show that if $\Delta$ is simply
connected then $H_\Delta$ is finitely presented by $\mathcal{P}_H =
\langle \Edge(\Delta) \, | \, \mc{R}_H \rangle$ where $\mc{R}_H$
consists of all words $e \overline{e}$ for $e \in \Edge(\Delta)$ and
all words $efg$ and $e^{-1} f^{-1} g^{-1}$ where $e \cdot f \cdot g$
is a combinatorial $1$-cycle in $\Delta$.  If we identify the
vertices of $\Delta$ with the generators of $A$, then the embedding
$H_\Delta \hookrightarrow A$ is given by mapping $e \mapsto \iota e
(\tau e)^{-1}$ for each edge $e \in \Edge(\Delta)$. In this section
we will prove Theorem~\ref{thm1} by demonstrating that the Dehn
function $\delta$ of the presentation $\mc{P}_H$ satisfies
$\delta(n) \preceq n^4$.

Choose a base vertex $q$ and a spanning tree $T$ in the $1$-skeleton
of $\Delta$.  Given $n \in \Z$ and vertices $u$ and $v$ of $\Delta$
we write $p_n(u, v)$ for the element $e_1^n \ldots e_l^n$ of
$\Edge(\Delta)^{\pm\bast}$ where $e_1 \cdot \ldots \cdot e_l$ is the
unique geodesic combinatorial path in $T$ from $u$ to $v$.  We write
$p(u, v)$ as shorthand for $p_1(u, v)$.  Note that as group elements
\begin{equation} \label{Eqn1} \begin{aligned}
  p_n(u, v)^{-1} &= (e_1^n \ldots e_l^n)^{-1} \\
  &= e_l^{-n} \ldots e_1^{-n} \\
  &= \ol{e_l}^n \ldots \ol{e_1}^n \\
  &= p_n(v, u)
\end{aligned} \end{equation} in $H_\Delta$.  For each $e \in \Edge(\Delta)$
define $w_e$ to be the word $p(q, \iota e) e p(\iota e, q)$ of
$\Edge(\Delta)^{\pm\bast}$.  In \cite{Dicks1} it is proved that
mapping $e \mapsto w_e$ defines an automorphism $\theta$ of
$H_\Delta$  and that $H_\Delta \rtimes_\theta \Z$ is isomorphic to
$A$ with $e \in \Edge(\Delta)$ corresponding to $\iota e (\tau
e)^{-1}$ and the generator $t$ of $\Z$ corresponding to $q \in A$.
It is also shown that if $e_1 \cdot \ldots \cdot e_l$ is a
combinatorial $1$-cycle then $e_1^n \ldots e_l^n$ is null-homotopic
in $H_\Delta$. Define $\mc{S}_H$ to be the set of words $\{t e
t^{-1} w_e \, : \, e \in \Edge(\Delta)\}$ in $(\Edge(\Delta) \cup
\{t\})^{\pm\bast}$ so $A$ is finitely presented by $\mc{P}_A' =
\langle \Edge(\Delta) , t \, | \, \mc{R}_H, \mc{S}_H \rangle$.

It is proved in \cite{Charney1} that $A$ is CAT(0) (see
\cite{brid99} for the definition of a CAT(0) group) so by
 \cite[Proposition~III.$\Gamma$.1.6]{brid99} there exists a
finite presentation for $A$ and an area-radius pair $(\alpha, \rho)$
for this presentation with $\alpha(n) \simeq n^2$ and $\rho(n)
\simeq n$.  By Proposition~\ref{prop4} it follows that there is an
area-radius pair $(\alpha', \rho')$ for $\mc{P}'_A$ with $\alpha'(n)
\simeq n^2$ and $\rho'(n) \simeq n$.

The following lemma details some properties of the automorphism
$\theta$ of $H_\Delta$.  Of these we will only need (vii), but this
property is most easily proved via the preceding sequence of
assertions.

\renewcommand{\labelenumi}{(\roman{enumi})}
\begin{lem}
  For all $e \in \Edge(\Delta)$ and $n \in \Z$ the following
  equalities hold in $H_\Delta$:
  \begin{enumerate}
  \item $\theta(e) = p(q, \iota e) e p(q, \iota e)^{-1} = p(q, \iota e) e^2 p(\tau e, q)
  = p(q, \iota e) e^2 p(q, \tau e)^{-1}$.

  \item $\theta(e^n) = p(q, \iota e) e^n p(\iota e, q) =
  p(q, \iota e) e^{n+1} p(\tau e, q) = p(q, \iota e) e^{n+1} p(q, \tau e)^{-1}$.

  \item If $e_1 \cdot \ldots \cdot e_l$ is a combinatorial path
  then \[\theta(e_1^n \ldots e_l^n) = p(q, \iota e) e_1^{n+1}
  \ldots e_l^{n+1} p(\tau e, q).\]

  \item $\theta^{-1}(e) = p_{-1}(q, \iota e) p_{-1}(\tau e, q) = p_{-1}(q, \iota e)
  e p_{-1}(\iota e, q) = p_{-1}(q, \iota e) e p_{-1}(q, \iota e)^{-1}$.

  \item $\theta^{-1}(e^n) = p_{-1}(q, \iota e) e^n p_{-1}(\iota e,
  q) = p_{-1}(q, \iota e) e^{n-1} p_{-1}(\tau e, q) = p_{-1}(q, \iota e)
  e^{n-1} p_{-1}(q, \tau e)^{-1}$.

  \item If $e_1 \cdot \ldots \cdot e_l$ is a combinatorial path
  then \[\theta^{-1}(e_1^n \ldots e_l^n) = p_{-1}(q, \iota e) e_1^{n-1}
  \ldots e_l^{n-1} p_{-1}(\tau e, q).\]

  \item $\theta^k(e) = p_k(q, \iota e) e^{k+1} p_k(\tau e, q)$.
\end{enumerate}
\end{lem}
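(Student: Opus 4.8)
The plan is to prove the seven equalities in sequence, since as the paper notes each one feeds into the next. Throughout I work with equalities in $H_\Delta$, so I may freely use the relations of $\mc{P}_H$: namely $e\ol{e} = 1$ for every edge, and $efg = 1$ whenever $e\cdot f\cdot g$ is a combinatorial $1$-cycle. A key preliminary observation, which I would establish first, is that if $e_1\cdot\ldots\cdot e_l$ is a combinatorial $1$-cycle then $e_1 e_2 \cdots e_l = 1$ in $H_\Delta$ — this follows by triangulating the cycle using the flag condition and applying the triangle relators repeatedly, and it is essentially already recorded in \cite{Dicks1}. A useful consequence is that $p(u,v)p(v,w) = p(u,w)$ in $H_\Delta$ for any three vertices (concatenate tree paths and cancel backtracking), and similarly $p(u,v)^{-1} = p(v,u)$ as in \eqref{Eqn1}.

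For part (i), I start from the definition $\theta(e) = w_e = p(q,\iota e)\, e\, p(\iota e, q)$. To get the second expression I would replace $p(\iota e, q)$ by $p(\iota e,\tau e)\cdot p(\tau e, q) = e\cdot p(\tau e,q)$... but one must be careful: $p(\iota e,\tau e)$ is the \emph{tree} geodesic, which need not be the single edge $e$ unless $e\in T$. The correct manoeuvre is: the edge $e$ together with the tree geodesic from $\tau e$ back to $\iota e$ forms a combinatorial $1$-cycle, hence $e\cdot p(\tau e,\iota e) = 1$, i.e. $e = p(\iota e,\tau e)$ in $H_\Delta$; then $p(\iota e,q) = p(\iota e,\tau e)p(\tau e,q) = e\,p(\tau e,q)$, giving $\theta(e) = p(q,\iota e)e^2 p(\tau e,q)$, and the third form is just $p(\tau e,q) = p(q,\tau e)^{-1}$. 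The first form, $p(q,\iota e)\,e\,p(q,\iota e)^{-1}$, comes from $p(\iota e,q) = p(q,\iota e)^{-1}$.

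Parts (ii) and (iii) follow by raising to powers / iterating. For (ii): $\theta(e^n) = \theta(e)^n = \big(p(q,\iota e)\,e\,p(\iota e,q)\big)^n = p(q,\iota e)\,e^n\,p(\iota e,q)$ since the middle $p(\iota e,q)p(q,\iota e) = 1$ telescopes. Then substitute $p(\iota e,q) = e\,p(\tau e,q)$ as above to get the $e^{n+1}$ form. For (iii): apply $\theta$ to the product $e_1^n\cdots e_l^n$, expand each $\theta(e_i^n) = p(q,\iota e_i)e_i^{n+1}p(\tau e_i, q)$, and observe that since $\tau e_i = \iota e_{i+1}$ the adjacent factors $p(\tau e_i,q)p(q,\iota e_{i+1}) = p(\iota e_{i+1},q)^{-1}\cdot p(\iota e_{i+1},q)\cdot\ldots$ — more precisely $p(\tau e_i,q)p(q,\iota e_{i+1}) = p(\tau e_i,\iota e_{i+1}) = 1$ because $\tau e_i = \iota e_{i+1}$ — so everything telescopes to $p(q,\iota e_1)\,e_1^{n+1}\cdots e_l^{n+1}\,p(\tau e_l,q)$. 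Parts (iv)–(vi) are the exact mirror images, using $\theta^{-1}$; I would either redo the computation with $p_{-1}$ in place of $p$, or derive (iv) by applying $\theta^{-1}$ to (i) and solving. Note the form $\theta^{-1}(e) = p_{-1}(q,\iota e)e\,p_{-1}(\iota e,q)$ should be checked directly against $\theta(\theta^{-1}(e)) = e$.

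Finally, part (vii) is proved by induction on $k\in\Z$, with (ii)/(iii) handling $k\geq 0$ and (v)/(vi) handling $k\leq 0$. For the inductive step with $k\geq 1$: assuming $\theta^k(e) = p_k(q,\iota e)e^{k+1}p_k(\tau e,q)$, apply $\theta$ and use (iii) — noting $p_k(q,\iota e)$ is a product of $k$-th powers of edges along a combinatorial path — to obtain $p(q,?)\,(\text{shifted path})^{k+1}\,p(?,q)$, and then recognize the resulting product of $(k{+}1)$-th powers of tree edges as $p_{k+1}(\cdot,\cdot)$ after the telescoping of the connecting $p(\cdot,q)p(q,\cdot)$ terms; one checks the endpoints match using that $\Phi$-type endpoint bookkeeping. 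The analogous step for $k\leq -1$ uses (vi). The main obstacle I anticipate is purely bookkeeping: keeping the base-vertex correction words $p(\cdot,q)$ and $p(q,\cdot)$ aligned so that they telescope correctly at each stage, and being careful that $p(\iota e,\tau e)$ equals $e$ (not a single tree edge) only as an element of $H_\Delta$, via the $1$-cycle relator, rather than as a word. There is no deep idea here beyond the one observation that $1$-cycles are null-homotopic; the proof is a careful unwinding of definitions.
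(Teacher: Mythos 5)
Your proposal is correct and follows essentially the same route as the paper: part (i) via the null-homotopy of the $1$-cycle formed by $e$ and the tree geodesics (together with equation \eqref{Eqn1}), parts (ii), (iii), (v) and (vi) by telescoping, part (iv) by verifying that $\theta$ applied to the candidate word returns $e$, and part (vii) by induction on $|k|$ using (iii) and (vi). Your explicit handling of the subtlety that $e = p(\iota e, \tau e)$ only as elements of $H_\Delta$ (via the $1$-cycle relation) is exactly the content the paper compresses into its one-line justifications.
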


\begin{proof}
\mbox{}
   \begin{enumerate}
     \item Follows from equation \eqref{Eqn1} and the fact that $p(q, \iota e) e p(\tau
     e ,q)$ is null-homotopic.

     \item Follows from (i) on telescoping.

     \item Follows from (ii) on telescoping.

     \item Follows from the calculation \begin{align*}
       \theta(p_{-1}(q, \iota e) p_{-1}(\tau e, q))
       &= p(q, q) p_0(q, \iota e) p(\iota e, q) p(q, \tau e)
       p_0(\tau e, q) p(q,q) \\
       &= p(\iota e, q) p(q, \tau e) \\
       &= e.
     \end{align*}

     \item Follows from (iv) on telescoping.

     \item Follows from (v) on telescoping.

     \item Follows from (iii) and (vi) by induction on $|k|$.
   \end{enumerate}
\end{proof}

For each $n \in \Z$ define a homomorphism $\Phi_n :
\Edge(\Delta)^{\pm\bast} \rightarrow \Edge(\Delta)^{\pm\bast}$ which
commutes with the inversion involution and is a lift of $\theta^n$
by mapping $e \mapsto p_n(q, \iota e) e^{n+1} p_n(\tau e, q)$.
Define the collections of words
\begin{align*} \overline{\mc{R}}_H &=
\{\Phi_n(r) \, : \, r \in \mc{R}_H, n \in \Z\}\\
\overline{\mc{S}}_H &= \{ \Phi_{n+1}(e) \Phi_n(w_e)^{-1} \, : \, e
\in \Edge(\Delta), n \in \Z\}\end{align*} in
$\Edge(\Delta)^{\pm\bast}$, and consider the presentation
$\mc{P}_H^\infty = \langle \Edge(\Delta) \, | \,
\overline{\mc{R}}_H, \overline{\mc{S}}_H \rangle$ of $H_\Delta$.
Define an index $\| \cdot \|$ on $\overline{\mc{R}}_H \cup
\overline{\mc{S}}_H$ by setting $\| \omega \|$ to be the minimum
value of $|n|$ such that either $\omega \equiv \Phi_n(r)$ for some
$r \in \mc{R}_H$ or $\omega \equiv \Phi_{n+1}(a) \Phi_n(w_a)^{-1}$
for some $e \in \Edge(\Delta)$.

By Theorem~\ref{thm2}, $(\alpha', \rho')$ is an area-penetration
pair for the indexed presentation $(\mc{P}_H^\infty, \|\cdot\|)$.
Thus, to complete the proof of Theorem~\ref{thm1} it suffices, by
Proposition~\ref{prop5}, to show that the relational area function
$\RArea$ of $(\mc{P}_H^\infty, \| \cdot \|)$ over $\mc{P}_H$
satisfies $\RArea(n) \simeq n^2$.  We devote the remainder of the
section to this task.

Let $\Dist$ be the length metric on the $1$-skeleton of $T$ given by
setting the length of each edge to $1$.  Define \[L = \max\{\Dist(u,
v) \, : \, u, v \in \Vert(\Delta) \}.\]

\begin{lem} \label{lem4}
  $\Area_{\mc{P}_H}\big(\Phi_n(e \ol{e})\big) \leq (2L+1)|n| + 1$
  for all $e \in \Edge(\Delta)$.
\end{lem}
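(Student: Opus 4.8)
First I would expand $\Phi_n(e\ol e)$ explicitly using the defining formula for $\Phi_n$. Since $\Phi_n(e) = p_n(q,\iota e) e^{n+1} p_n(\tau e, q)$ and $\Phi_n$ commutes with inversion, we have $\Phi_n(\ol e) = \Phi_n(e)^{-1}$ only after free reduction; more usefully, applying the homomorphism property directly, $\Phi_n(e\ol e) \equiv \Phi_n(e)\Phi_n(\ol e)$. Note $\ol e$ is an edge with $\iota \ol e = \tau e$ and $\tau \ol e = \iota e$, so $\Phi_n(\ol e) \equiv p_n(q,\tau e)\,\ol e^{\,n+1}\, p_n(\iota e, q)$. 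Hence
\[
  \Phi_n(e\ol e) \equiv p_n(q,\iota e)\, e^{n+1}\, p_n(\tau e, q)\, p_n(q, \tau e)\, \ol e^{\,n+1}\, p_n(\iota e, q).
\]
The plan is to reduce this word to the empty word by a null-$\mc{P}_H$-scheme (in the sense of the definition given earlier) whose transitions have small total cost, and then invoke the remark that the sum of the transition costs bounds the $\mc{P}_H$-area.

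The key cancellations are these. By \eqref{Eqn1}, $p_n(\tau e, q)p_n(q,\tau e)$ equals the identity in $H_\Delta$; in fact, reading off the geodesic in $T$, the word $p_n(\tau e, q)\,p_n(q,\tau e)$ is a concatenation of $\Dist(\tau e, q) \le L$ subwords of the form $f^n \ol f^{\,n}$ (after regrouping, $f^{-n}$ is $\ol f^{\,n}$ as a group element, but as words one gets pairs $f^n f^{-n}$). Each such pair $f^n f^{-n}$ freely reduces to the empty word at zero cost, so this central block can be deleted by a single transition of cost $0$ — or, being careful about word-versus-group equality, by first inserting the relator-free identities, which cost nothing. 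After this deletion we are left with $p_n(q,\iota e)\, e^{n+1}\, \ol e^{\,n+1}\, p_n(\iota e, q)$. Now $e^{n+1}\ol e^{\,n+1}$: as words, $\ol e = e^{-1}$ is false unless we use a relator, so here is where $\mc{R}_H$ enters. The relator $e\ol e \in \mc{R}_H$ lets us cancel an adjacent $e\ol e$ pair at cost $1$; doing this $|n+1|$ times (formally, one transition of cost $|n+1|$, since the word $e^{n+1}\ol e^{\,n+1}$ has $\mc{P}_H$-area exactly $|n+1|$) reduces the middle to nothing, leaving $p_n(q,\iota e)\,p_n(\iota e, q)$, which again freely reduces to the empty word at cost $0$ by the same argument as before.

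Summing: the total cost is at most $0 + |n+1| + 0 \le |n|+1$ from the relator applications, but I have been sloppy about the $\ol f^{\,n} = f^{-n}$ identifications in the two $p_n$-blocks. Being honest, converting $p_n(\tau e, q)\,p_n(q,\tau e)$ to a freely-trivial word requires rewriting each $e_i^n \ldots e_l^n \ol{e_l}^{\,n}\ldots$ using the relators $e_i \ol e_i$; each geodesic has length $\le L$, each contributes $|n|$ applications, giving cost $\le L|n|$ per block and $\le 2L|n|$ for the two blocks, plus $|n+1|$ for the central $e$-cancellation. This totals $\le 2L|n| + |n| + 1 = (2L+1)|n| + 1$, exactly the claimed bound. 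I expect the main obstacle to be bookkeeping the distinction between free equality and equality in $H_\Delta$ — specifically, making sure that when I write $p_n(u,v)^{-1} = p_n(v,u)$ I account for the fact that this is an equality of group elements requiring relator applications, not a free identity, and that the count of those applications is genuinely $\Dist(u,v)\cdot|n| \le L|n|$ rather than something larger. Once that accounting is pinned down the estimate falls out immediately.
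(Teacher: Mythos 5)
Your proposal is correct and follows essentially the same route as the paper: expand $\Phi_n(e\ol{e})$, cancel the inner block $p_n(\tau e, q)p_n(q,\tau e)$ and the outer block $p_n(q,\iota e)p_n(\iota e, q)$ at cost $L|n|$ each via the relators $f\ol{f}$ (the calculation \eqref{Eqn1}), and cancel $e^{n+1}\ol{e}^{n+1}$ at cost $|n|+1$. Your mid-proof wobble about $f^n\ol{f}^n$ being freely trivial is resolved correctly in your final accounting, which matches the paper's exactly.
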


\begin{proof}
  The calculation (\ref{Eqn1}) shows that $p_n(q, v)^{-1}$ can be
  converted to $p_n(v, q)$ at a $\mc{P}_H$-cost of at most $L|n|$
  for all $v \in \Vert(\Delta)$.  The following is a
  null-$\mc{P}_H$-scheme for the word $\Phi_n(e
  \ol{e})$: \begin{align*}
    \Phi_n(e \ol{e}) &\equiv p_n(q, \iota e) e^{n+1} p_n(\tau e, q)
    p_n(q, \tau e) \ol{e}^{n+1} p_n(\iota e, q) &&\\
    &\rightsquigarrow p_n(q, \iota e) e^{n+1} \ol{e}^{n+1} p_n(\iota e, q)
    &&\text{Cost} \leq L|n|\\
    &\rightsquigarrow p_n(q, \iota e) p_n(\iota e, q)
    &&\text{Cost} \leq |n|+1\\
    &\rightsquigarrow \emptyset &&\text{Cost} \leq L|n|
  \end{align*} Total cost $\leq (2L + 1) |n| + 1$.
\end{proof}

\begin{lem} \label{lem1}
  Let $e \cdot f \cdot g$ be a combinatorial $1$-cycle in $\Delta$.
  Then $\Area_{\mc{P}_H}(e^n f^n g^n) \leq 3|n|^2$.
\end{lem}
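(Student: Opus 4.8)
The plan is to prove the bound by induction on $|n|$, using only the ``triangle'' relators of $\mc{P}_H$. Since the cyclic permutations of the combinatorial $1$-cycle $e\cdot f\cdot g$ are themselves combinatorial $1$-cycles, all of $efg,\ fge,\ gef,\ e^{-1}f^{-1}g^{-1},\ f^{-1}g^{-1}e^{-1},\ g^{-1}e^{-1}f^{-1}$ lie in $\mc{R}_H$, so each such word (and each cyclic permutation or inverse of one) has $\mc{P}_H$-area $1$. It suffices to treat $n\geq 0$; the case $n\leq 0$ is entirely analogous with the inverses of these relators playing the corresponding roles. The base case $n=0$ is the empty word.

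Before the induction I would establish a \textbf{sliding estimate}: for every $k\geq 0$, the word $g^{-k}fg^{k}$ can be converted to $f$ by a null-$\mc{P}_H$-scheme of total cost at most $2k$. This follows by peeling off one $g$ at a time, each step costing at most $\Area_{\mc{P}_H}(g^{-1}fgf^{-1})\leq 2$; and this last bound holds because $fg$ and $e^{-1}$ represent the same element of $H_\Delta$ with $fge\in\mc{R}_H$, so $g^{-1}(fg)f^{-1}$ can be rewritten as $g^{-1}e^{-1}f^{-1}$ at cost $1$, and $g^{-1}e^{-1}f^{-1}$ is a cyclic permutation of the relator $e^{-1}f^{-1}g^{-1}$, hence of area $1$.

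For the inductive step write $w_m\equiv e^m f^m g^m$ and verify the free identity
\[
  w_n\ \FreeEq\ (e\,w_{n-1}\,e^{-1})\cdot(e\,g^{1-n}fg^{n}).
\]
Since $\mc{P}_H$-area is invariant under conjugation and subadditive under concatenation of null-homotopic words, this gives $\Area_{\mc{P}_H}(w_n)\leq \Area_{\mc{P}_H}(w_{n-1})+\Area_{\mc{P}_H}(e\,g^{1-n}fg^{n})$. Rewriting $e\,g^{1-n}fg^{n}\equiv e\,(g^{-(n-1)}fg^{n-1})\,g$ and applying the sliding estimate converts the second word to $efg$ at cost at most $2(n-1)$, while $efg$ itself has area $1$. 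Hence, using the inductive hypothesis $\Area_{\mc{P}_H}(w_{n-1})\leq 3(n-1)^2$,
\[
  \Area_{\mc{P}_H}(w_n)\ \leq\ 3(n-1)^2+(2n-1)\ =\ 3n^2-4n+2\ \leq\ 3n^2,
\]
which closes the induction.

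The only genuine content is the sliding estimate: that these particular length-three relators of $\mc{P}_H$ permit commuting one generator past a power of another at bounded cost per letter (equivalently, that $[g,f]$ is filled by two cells). Everything else is the free-group identity above together with a geometric-series sum. (In fact the obvious triangular van Kampen diagram, tiled by $n^2$ copies of the triangle relators and their inverses, already yields $\Area_{\mc{P}_H}(w_n)\leq n^2$; but the weaker bound is all that is needed downstream and is cleaner to prove directly.)
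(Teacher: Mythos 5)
Your argument is correct and rests on the same key observation as the paper's proof: the two triangle relators attached to the $1$-cycle force two of the edge generators to commute at $\mc{P}_H$-cost $2$, and $\Theta(n^2)$ such commutations fill the word. The only difference is bookkeeping --- you package the $n^2$ commutations as an induction on $n$ via the decomposition $w_n \FreeEq (e\,w_{n-1}\,e^{-1})(e\,g^{1-n}fg^{n})$ and a sliding estimate for $[f,g]$, whereas the paper writes a single three-step scheme that replaces $g^n$ by $(f^{-1}e^{-1})^n$ at cost $|n|$ and then sorts the letters using $2|n|^2$ applications of $[e,f]$ --- so the substance is the same.
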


\begin{proof}
  Note that the relators $efg$ and $e^{-1} f^{-1} g^{-1}$ imply
  that $ef = g^{-1} = fe$, so $[e, f]$ is null-homotopic with
  $\mc{P}_H$-$\Area$ $2$.  The following is a null-$\mc{P}_H$-scheme for
  the word $e^n f^n g^n$: \begin{align*}
    e^n f^n g^n &\rightsquigarrow e^n f^n (f^{-1} e^{-1})^n
    &&\text{Cost} \leq |n|\\
    &\rightsquigarrow e^n f^n f^{-n} e^{-n} &&\text{Cost} \leq
    2|n|^2\\
    &\stackrel{\rm free}{=} \emptyset &&
  \end{align*} Total cost $\leq 2|n|^2 + |n| \leq 3|n|^2$.
\end{proof}

\begin{lem} \label{lem5}
  Let $e \cdot f \cdot g$ be a combinatorial $1$-cycle in
  $\Delta$.  Then $\Area_{\mc{P}_H} \big( \Phi_n(efg) \big) \leq
  3|n|^2 + (3L + 6)|n| + 3$.
\end{lem}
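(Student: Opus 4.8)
The plan is to decompose $\Phi_n(efg)$ into pieces whose $\mc{P}_H$-areas can be estimated separately, using the explicit formula $\Phi_n(e) \equiv p_n(q, \iota e) e^{n+1} p_n(\tau e, q)$ together with the combinatorial $1$-cycle fact (from \cite{Dicks1}) that $e_1^m \ldots e_l^m$ is null-homotopic whenever $e_1 \cdot \ldots \cdot e_l$ is a combinatorial $1$-cycle. Concretely, since $e \cdot f \cdot g$ is a $1$-cycle we have $\iota f = \tau e$, $\iota g = \tau f$ and $\iota e = \tau g$, so writing out $\Phi_n(efg) \equiv \Phi_n(e)\Phi_n(f)\Phi_n(g)$ gives
\[
p_n(q, \iota e)\, e^{n+1}\, p_n(\tau e, q)\, p_n(q, \iota f)\, f^{n+1}\, p_n(\tau f, q)\, p_n(q, \iota g)\, g^{n+1}\, p_n(\tau g, q).
\]
First I would use calculation \eqref{Eqn1} to cancel the middle pairs $p_n(\tau e, q)p_n(q, \iota f)$ and $p_n(\tau f, q)p_n(q, \iota g)$ — each such cancellation is a transition of $\mc{P}_H$-cost at most $L|n|$ by the argument in the proof of Lemma~\ref{lem4} — and similarly to cancel the outer pair $p_n(\tau g, q)\cdots p_n(q,\iota e)$ after cyclically conjugating, costing another $L|n|$. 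This leaves (up to free equality and cyclic conjugation) the word $e^{n+1}f^{n+1}g^{n+1}$, at a total cost so far of at most $3L|n|$ plus a few units of slack for the endpoint bookkeeping.

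The second step is to bound $\Area_{\mc{P}_H}(e^{n+1}f^{n+1}g^{n+1})$. Here I would invoke Lemma~\ref{lem1} with $n$ replaced by $n+1$, giving $\Area_{\mc{P}_H}(e^{n+1}f^{n+1}g^{n+1}) \leq 3|n+1|^2 \leq 3(|n|+1)^2 = 3|n|^2 + 6|n| + 3$. Combining the two steps, the total $\mc{P}_H$-area of $\Phi_n(efg)$ is at most $3L|n| + 3|n|^2 + 6|n| + 3 + O(|n|)$; chasing through the exact constants from the transition costs (each middle cancellation is really bounded by $L|n|$, and one must account for possible off-by-one lengths in the $p_n$ words when $\iota e = \tau g$ etc.) yields the stated bound $3|n|^2 + (3L+6)|n| + 3$.

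The main obstacle will be controlling the constants precisely rather than just up to $\simeq$: one must verify that the three ``geodesic-reversal'' cancellations really do cost at most $L|n|$ each with no additive error (which follows since $p_n(u,v)^{-1}$ and $p_n(v,u)$ differ by at most $L|n|$ applications of the relator $e\ol e$, using \eqref{Eqn1} and that the geodesic in $T$ has length $\leq L$), and that after all cancellations the residual word is \emph{exactly} a cyclic conjugate of $e^{n+1}f^{n+1}g^{n+1}$ so that Lemma~\ref{lem1} applies verbatim. A minor subtlety is that when $n < 0$ the exponent $n+1$ could be $0$ or positive-then-negative, but $|n+1| \leq |n|+1$ handles all cases uniformly, and when $n+1 = 0$ the word $e^{n+1}f^{n+1}g^{n+1}$ is empty and the bound is trivial. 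Since none of this requires any genuinely new idea beyond Lemmas~\ref{lem4} and~\ref{lem1}, I expect the proof to be a short explicit null-$\mc{P}_H$-scheme of three or four transitions followed by an application of Lemma~\ref{lem1}.
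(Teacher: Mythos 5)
Your proposal is correct and follows essentially the same route as the paper: expand $\Phi_n(efg)$ via the explicit formula, cancel the three geodesic-path pairs using \eqref{Eqn1} at cost at most $L|n|$ each, and apply Lemma~\ref{lem1} with exponent $n+1$, giving $3|n+1|^2 + 3L|n| \leq 3|n|^2 + (3L+6)|n| + 3$. The only cosmetic difference is that the paper avoids your cyclic conjugation by killing the inner word $e^{n+1}f^{n+1}g^{n+1}$ first and then cancelling the leftover pair $p_n(q,\iota e)\,p_n(\tau g, q)$, so no ``endpoint bookkeeping'' slack is needed.
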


\begin{proof}
  The following is a null-$\mc{P}_H$-scheme for the word
  $\Phi_n(efg)$: \begin{align*}
    \Phi_n(efg) &\equiv p_n(q, \iota e) e^{n+1} p_n(\tau e, q)
    p_n(q, \iota f) f^{n+1} p_n(\tau f, q)\ldots &&\\
    & \qquad \ldots p_n(q, \iota g) g^{n+1} p_n(\tau g, q) &&\\
    &\rightsquigarrow p_n(q, \iota e) e^{n+1} f^{n+1} g^{n+1} p_n(\tau g, q)
    &&\text{Cost} \leq 2L|n|\\
    &\rightsquigarrow p_n(q, \iota e) p_n(\tau g, q)
    &&\text{Cost} \leq 3|n+1|^2\\
    &\rightsquigarrow \emptyset &&\text{Cost} \leq L|n|.
  \end{align*} Total cost $\leq 3|n|^2 + (3L + 6)|n| + 3$.
\end{proof}

\begin{defn}
  Given a combinatorial $1$-cycle $C$ in $\Delta$, a sequence $(C_i)_{i=0}^m$ of combinatorial
  $1$-cycles is said to be \emph{combinatorial null-homotopy} for
  $C$ if $C_0 = C$, $C_m = \emptyset$ and each
  $C_{i+1}$ is obtained from $C_i$ by one of the following moves:
  \begin{enumerate}
    \item \emph{$1$-cell expansion}: $C_i = e_1 \cdot \ldots \cdot
    e_l \rightsquigarrow C_{i+1} = e_1 \cdot \ldots \cdot e_k
    \cdot e \cdot \ol{e} \cdot e_{k+1} \cdot \ldots \cdot e_l$ for
    some $k$, where $e \in \Edge(\Delta)$;

    \item \emph{$1$-cell collapse}: Reverse of a $1$-cell
    expansion;

    \item \emph{$2$-cell expansion}: $C_i = e_1 \cdot \ldots \cdot
    e_l \rightsquigarrow C_{i+1} = e_1 \cdot \ldots \cdot e_k
    \cdot e \cdot f \cdot g \cdot e_{k+1} \cdot \ldots \cdot e_l$
    for some $k$, where $e \cdot f \cdot g$ is a combinatorial
    $1$-cycle;

    \item \emph{$2$-cell collapse}: Reverse of a $2$-cell
    expansion.
  \end{enumerate}
\end{defn}

\begin{lem} \label{lem2}
  If $(C_i)_{i=0}^m$ is a combinatorial null-homotopy for the
  $1$-cycle $e_1 \cdot \ldots \cdot e_l$ then the word $e_1^n
  \ldots e_l^n$ has $\mc{P}_H$-Area $\leq 3m|n|^2$.
\end{lem}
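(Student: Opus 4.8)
The plan is to reduce the statement to the two previously established building blocks, Lemma~\ref{lem4} and Lemma~\ref{lem5}, by tracking how the words $e_1^n \ldots e_l^n$ change along the combinatorial null-homotopy. The key observation is that each move in the null-homotopy corresponds, after applying the ``$n$-th power'' operation $e_i \mapsto e_i^n$, to inserting or deleting a sub-word of the form $e^n \ol{e}^n$ (for a $1$-cell expansion/collapse) or $e^n f^n g^n$ (for a $2$-cell expansion/collapse). So I would set up a null-$\mc{P}_H$-scheme
\[
e_1^n \ldots e_l^n \equiv u_0 \rightsquigarrow u_1 \rightsquigarrow \ldots \rightsquigarrow u_m \equiv \emptyset,
\]
where $u_i$ is obtained from the cycle $C_i = f_1 \cdot \ldots \cdot f_{l_i}$ by the substitution $f_j \mapsto f_j^n$, i.e.\ $u_i \equiv f_1^n \ldots f_{l_i}^n$.

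First I would check that each transition $u_i \rightsquigarrow u_{i+1}$ has small $\mc{P}_H$-cost. The word $u_i u_{i+1}^{-1}$ is, up to free equality, a conjugate of either $e^n \ol{e}^n$ or $(e^n f^n g^n)^{\pm 1}$ by a prefix of $u_i$ (one needs the trivial remark that the conjugating word does not affect the $\mc{P}_H$-Area, since $\Area_{\mc{P}_H}(v r v^{-1}) = \Area_{\mc{P}_H}(r)$). For a $1$-cell move, Lemma~\ref{lem4} bounds this cost; wait --- Lemma~\ref{lem4} is about $\Phi_n(e\ol{e})$, not $e^n \ol{e}^n$, so instead I would note directly that $e^n \ol{e}^n \FreeEq \emptyset$ already, giving cost $0$ for $1$-cell moves (indeed $e \ol e$ is a relator, but even more simply $\ol e \FreeEq e^{-1}$ so $e^n \ol e^n \FreeEq \emptyset$ in the free group, hence area $0$). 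For a $2$-cell move, Lemma~\ref{lem1} gives cost $\leq 3|n|^2$. Summing the costs of the $m$ transitions then yields $\Area_{\mc{P}_H}(e_1^n \ldots e_l^n) \leq 3m|n|^2$, and since the sum of transition costs in a null-$\mc{P}_H$-scheme bounds the area of the initial word (as noted after the definition of null-$\mc{P}$-scheme), we are done.

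The main obstacle --- really the only point requiring care --- is the bookkeeping needed to confirm that $u_i u_{i+1}^{-1}$ is freely equal to a conjugate of the appropriate short relator-like word, and in particular that the conjugating prefix is literally a prefix of $u_i$ so that no extra cancellation or length blow-up occurs. For a $2$-cell expansion inserting $e \cdot f \cdot g$ after position $k$, one has $u_{i+1} \equiv f_1^n \ldots f_k^n\, e^n f^n g^n\, f_{k+1}^n \ldots f_{l_i}^n$ and $u_i \equiv f_1^n \ldots f_k^n f_{k+1}^n \ldots f_{l_i}^n$, so $u_{i+1} u_i^{-1} \FreeEq (f_1^n \ldots f_k^n)(e^n f^n g^n)(f_1^n \ldots f_k^n)^{-1}$, which indeed has $\mc{P}_H$-Area equal to that of $e^n f^n g^n$, namely $\leq 3|n|^2$ by Lemma~\ref{lem1}; collapses are handled symmetrically, and $1$-cell moves analogously with the short word $e^n \ol e^n$ of area $0$. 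Everything else is routine, and the bound $3m|n|^2$ drops out immediately.
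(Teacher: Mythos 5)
Your overall strategy is exactly the paper's: run a null-$\mc{P}_H$-scheme $W_n(C_0) \rightsquigarrow W_n(C_1) \rightsquigarrow \ldots \rightsquigarrow W_n(C_m) \equiv \emptyset$, observe that each transition differs by a conjugate of $e^n\ol{e}^n$ or $(e^nf^ng^n)^{\pm1}$, bound the $2$-cell transitions by Lemma~\ref{lem1}, and sum. The bookkeeping about conjugating prefixes is fine. But there is a genuine error in your treatment of the $1$-cell moves: you assert that $\ol{e} \FreeEq e^{-1}$ and hence that $e^n\ol{e}^n \FreeEq \emptyset$, so that these transitions cost $0$. This is false. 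In the Dicks--Leary presentation $\mc{P}_H$ the generating set is $\Edge(\Delta)$, the set of \emph{directed} edges, so $e$ and $\ol{e}$ are two distinct generators and $\ol{e}$ is not the formal inverse $e^{-1}$ in $F(\Edge(\Delta))$. That is precisely why the words $e\ol{e}$ are included as relators in $\mc{R}_H$; if $\ol{e}$ were freely equal to $e^{-1}$ these relators would be vacuous. Consequently $e^n\ol{e}^n$ is not freely trivial and has positive area.

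The damage is repairable and the repair is what the paper does: convert $e^n\ol{e}^n$ to the empty word by $|n|$ applications of the relator $e\ol{e}$ (peel off one innermost pair $e\ol{e}$ at a time), so a $1$-cell move costs at most $|n|$ rather than $0$. Writing $m_1$ and $m_2$ for the numbers of $1$-cell and $2$-cell moves, the total is then $m_1|n| + 3m_2|n|^2 \leq 3(m_1+m_2)|n|^2 = 3m|n|^2$ (the case $n=0$ being trivial), so the stated bound still holds. You should also drop the parenthetical flirtation with Lemma~\ref{lem4}: as you noticed, that lemma concerns $\Phi_n(e\ol{e})$, not $e^n\ol{e}^n$, and is not what is needed here.
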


\begin{proof}
  Given a combinatorial $1$-cycle $C = e_1 \cdot \ldots \cdot
  e_l$, write $W_n(C)$ for the word $e_1^n \ldots e_l^n \in
  \Edge(\Delta)^{\pm\bast}$.  If the $1$-cycle $C_i$ is obtained
  from $C_{i-1}$ by a $1$-cell expansion or collapse then, by
  repeated application of a relator $e \ol{e}$, the word
  $W_n(C_{i-1})$ can be converted to the word $W_n(C_i)$ at a
  $\mc{P}_H$-cost of at most $|n|$.  If the $1$-cycle $C_i$ is
  obtained from $C_{i-1}$ by a $2$-cell expansion or collapse then,
  by Lemma~\ref{lem1}, the word $W_n(C_{i-1})$ can be converted to
  the word $W_n(C_i)$ at a $\mc{P}_H$-cost of at most $3|n|^2$.

  Define $m_1$ to be the number of $i$ for which $C_i$ is obtained
  from $C_{i-1}$ by a $1$-cell expansion or collapse.  Define $m_2$
  to be the number of $i$ for which $C_i$ is obtained
  from $C_{i-1}$ by a $2$-cell expansion or collapse.  Then the
  $\mc{P}_H$-Area of $e_1^n \ldots e_l^n = W_n(C)$ is at most
  $m_1|n| + 3m_2|n|^2 \leq 3(m_1 + m_2) |n|^2 = 3m|n|^2$.
\end{proof}

\begin{lem} \label{lem3}
  There exists a constant $K$ such that $\Area_{\mc{P}_H}\big(
  p_n(q, \iota e) e^n p_n(\tau e, q) \big) \leq K|n|^2$ for all $e
  \in \Edge(\Delta)$.
\end{lem}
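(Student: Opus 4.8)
The plan is to recognise the word $p_n(q,\iota e)\,e^n\,p_n(\tau e,q)$ as $W_n(C)$ for a suitable combinatorial $1$-cycle $C$ in $\Delta$, and then invoke Lemma~\ref{lem2}. Indeed, let $e_1\cdot\ldots\cdot e_k$ be the geodesic in $T$ from $q$ to $\iota e$ and let $f_1\cdot\ldots\cdot f_l$ be the geodesic in $T$ from $\tau e$ to $q$. Then $C := e_1\cdot\ldots\cdot e_k\cdot e\cdot f_1\cdot\ldots\cdot f_l$ is a combinatorial $1$-cycle in $\Delta$ (its initial vertex is $q$, it passes through $\iota e$, then through $\tau e$, then returns to $q$), and by definition $W_n(C)\equiv p_n(q,\iota e)\,e^n\,p_n(\tau e,q)$. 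Note that $C$ has combinatorial length at most $2L+1$, and crucially $C$ is one of only finitely many such cycles (there is at most one for each $e\in\Edge(\Delta)$, and $\Edge(\Delta)$ is finite).

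The next step is to produce, for each such cycle $C$, a combinatorial null-homotopy $(C_i)_{i=0}^{m(C)}$; this exists because $\Delta$ is simply connected, so the $1$-cycle $C$ bounds in the $2$-skeleton and can be contracted by a finite sequence of $1$-cell and $2$-cell expansions and collapses. (Strictly, one should remark that simple connectivity of the flag complex $\Delta$ guarantees such a combinatorial null-homotopy; this is the same fact underlying the Dicks--Leary presentation $\mc{P}_H$, and it is where the hypothesis that $\Delta$ is simply connected is used.) Set $m_0 = \max\{m(C) : C \text{ as above}\}$, the maximum taken over the finitely many cycles $C$ arising from edges $e\in\Edge(\Delta)$; this is a finite constant depending only on $\Delta$ and the choice of $T$ and $q$, not on $n$ or $e$.

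By Lemma~\ref{lem2} applied to the combinatorial null-homotopy of the relevant cycle $C$, we obtain
\[
  \Area_{\mc{P}_H}\big(p_n(q,\iota e)\,e^n\,p_n(\tau e,q)\big)
  = \Area_{\mc{P}_H}\big(W_n(C)\big)
  \leq 3\,m(C)\,|n|^2
  \leq 3 m_0 |n|^2,
\]
so the lemma holds with $K := 3 m_0$, which is independent of $e$ and $n$ as required.

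The only genuine obstacle is the existence of the combinatorial null-homotopy for $C$; everything else is bookkeeping. Once one is comfortable that simple connectivity of $\Delta$ yields, for any combinatorial $1$-cycle, a finite contraction through the four allowed moves — which is implicit in the Dicks--Leary description of $\mc{R}_H$ — the finiteness of $\Edge(\Delta)$ immediately converts the per-edge bounds $3m(C)|n|^2$ into a uniform bound $K|n|^2$.
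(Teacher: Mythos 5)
Your proof is correct and follows essentially the same route as the paper: identify $p_n(q,\iota e)\,e^n\,p_n(\tau e,q)$ with $W_n(C)$ for the $1$-cycle $\gamma_\iota(e)\cdot e\cdot\gamma_\tau(e)$, use simple connectivity to obtain a combinatorial null-homotopy, apply Lemma~\ref{lem2}, and take the maximum of $3m(e)$ over the finitely many edges.
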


\begin{proof}
  Given $e \in \Edge(\Delta)$ write $\gamma_\iota(e)$ and
  $\gamma_\tau(e)$ respectively for the unique combinatorial
  geodesic paths in $T$ from $q$ to $\iota e$ and from $\tau e$ to
  $q$.  Then $\gamma_\iota(e) \cdot e \cdot \gamma_\tau(e)$ is a
  combinatorial $1$-cycle for which there exists a combinatorial
  null-homotopy $\big(C_i(e)\big)_{i=0}^{m(e)}$ since $\Delta$ is simply-connected.
  By Lemma~\ref{lem2} $\Area_{\mc{P}_H} \big( p_n(q, \iota e) e^n p_n(\tau e, q)
  \big) \leq 3m(e)|n|^2$, so we can take $K = 3\max\{ m(e) \, : \,
  e \in \Edge(\Delta)\}$.
\end{proof}

\begin{lem} \label{lem6}
  Let $e \cdot f \cdot g$ be a combinatorial $1$-cycle in
  $\Delta$.  Then $\Area_{\mc{P}_H} \big(
  \Phi_n(e^{-1}f^{-1}g^{-1}) \big) \leq (3K + 4) |n|^2 + (6L + 6) |n| + 5$,
  where $K$ is the constant from Lemma~\ref{lem3}.
\end{lem}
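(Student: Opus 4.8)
The plan is to deduce Lemma~\ref{lem6} from Lemma~\ref{lem5}, applied to the \emph{reversed} $1$-cycle, with the cost of the reduction controlled by Lemma~\ref{lem3}. Since $\Delta$ is flag and $e\cdot f\cdot g$ is a combinatorial $1$-cycle, the vertices $\iota e,\iota f,\iota g$ span a $2$-simplex, so every directed edge between them lies in $\Edge(\Delta)$; although $\ol{e}\cdot\ol{f}\cdot\ol{g}$ need not be a combinatorial path, the sequence $\ol{g}\cdot\ol{f}\cdot\ol{e}$ is one, because $\tau\ol{g}=\iota g=\tau f=\iota\ol{f}$, $\tau\ol{f}=\iota f=\tau e=\iota\ol{e}$ and $\tau\ol{e}=\iota e=\tau g=\iota\ol{g}$, and hence it is itself a combinatorial $1$-cycle. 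Lemma~\ref{lem5} therefore gives $\Area_{\mc{P}_H}\big(\Phi_n(\ol{g}\,\ol{f}\,\ol{e})\big)\le 3|n|^2+(3L+6)|n|+3$.

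Next I would build a null-$\mc{P}_H$-scheme beginning at $\Phi_n(e^{-1}f^{-1}g^{-1})\equiv\Phi_n(e)^{-1}\Phi_n(f)^{-1}\Phi_n(g)^{-1}$, expanding $\Phi_n(e)^{-1}\equiv p_n(\tau e,q)^{-1}e^{-(n+1)}p_n(q,\iota e)^{-1}$. Using the calculation~\eqref{Eqn1} to replace each $p_n(u,v)^{-1}$ by $p_n(v,u)$ and the relator $e\ol{e}$ to replace $e^{-(n+1)}$ by $\ol{e}^{n+1}$ — at a cost of at most $(2L+1)|n|+1$ per factor — converts the word to $\Phi_n(\ol{e})\Phi_n(\ol{f})\Phi_n(\ol{g})$. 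Expanding these as $p_n(q,\iota\ol{x})\ol{x}^{n+1}p_n(\tau\ol{x},q)$, the words occurring at the two internal junctions and at the seam are products $p_n(v,q)p_n(q,v')$ of tree geodesics through $q$, each of which may be converted to a single tree geodesic $p_n(v,v')$ at linear $\mc{P}_H$-cost, exactly as in the calculations of Lemmas~\ref{lem4} and~\ref{lem3}. Here, because $\ol{e}\cdot\ol{f}\cdot\ol{g}$ is not a $1$-cycle, the residues $p_n(v,v')$ do not vanish but are genuine tree geodesics joining distinct vertices of the $2$-simplex; each such residue can be absorbed at $\mc{P}_H$-cost at most $K|n|^2$ via Lemma~\ref{lem3}, since $p_n(v,v')$ together with the triangle edge from $v'$ to $v$ bounds a $1$-cycle whose $n$-th power is exactly the word treated there. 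After these three absorptions the word is brought into the form $W_{n+1}(\ol{g}\cdot\ol{f}\cdot\ol{e})$ up to one further tree backtrack, and Lemma~\ref{lem1} bounds $W_{n+1}$ of this $1$-cycle by $3|n+1|^2$. Summing the three $K|n|^2$ terms, the $3|n+1|^2\le 3|n|^2+6|n|+3$ term, and the accumulated linear and constant terms gives the asserted bound $(3K+4)|n|^2+(6L+6)|n|+5$.

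The step I expect to be the main obstacle is precisely this middle one. In Lemma~\ref{lem5} the internal junctions are honest backtracks, because $efg$ traces out a $1$-cycle, so they simply cancel and one is left at once with a single $W_{n+1}$ of a triangle; here the ``second-type'' relator $e^{-1}f^{-1}g^{-1}$ records a commutativity relation rather than the boundary of a $2$-cell, so even after rewriting each $\Phi_n(x)^{-1}$ as $\Phi_n(\ol{x})$ the letters sit in the wrong cyclic order and the junctions leave tree-path residues behind. Reconciling these residues — for which the ambient relators naturally supply only $n$-th powers, whereas $\ol{e},\ol{f},\ol{g}$ enter with $(n+1)$-st powers — is what forces the three separate invocations of Lemma~\ref{lem3}, and hence the leading coefficient $3K+4$ in place of the $3$ of Lemma~\ref{lem5}.
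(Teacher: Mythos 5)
Your scheme shadows the paper's own proof up to the point where the tree‑path residues at the junctions are traded, via Lemma~\ref{lem3}, for powers of the triangle edges; that invocation is legitimate, since $p_n(\iota e,q)p_n(q,\tau f)=p_n(\tau g,q)p_n(q,\iota g)$ can be converted to $g^{-n}$ at cost $K|n|^2$ because the relevant null-homotopic word is a cyclic permutation of the one treated in that lemma. The argument breaks at the next step. The residues do not vanish and do not merge with their neighbours: the junction between $\Phi_n(\ol{e})$ and $\Phi_n(\ol{f})$ produces a power of $g$, not of $e$ or $f$, so after the three absorptions (and conjugating away the outer tree paths) you are left with the six-block word
\[
e^{-n-1}\,g^{-n}\,f^{-n-1}\,e^{-n}\,g^{-n-1}\,f^{-n},
\]
and not with $W_{n+1}(\ol{g}\cdot\ol{f}\cdot\ol{e})=\ol{g}^{\,n+1}\ol{f}^{\,n+1}\ol{e}^{\,n+1}$ ``up to one further tree backtrack''. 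Neither Lemma~\ref{lem1} nor Lemma~\ref{lem5} applies to this word: they bound the area of $W_n$ of a genuine $1$-cycle, i.e.\ a word of the form $x^ny^nz^n$, whereas the word above has its exponents interleaved. It is null-homotopic only because $e$, $f$, $g$ pairwise commute --- a consequence of having \emph{both} relators $efg$ and $e^{-1}f^{-1}g^{-1}$ --- and filling it requires shuffling $\Theta(n)$ letters past one another at total cost $\Theta(n^2)$. That is exactly the content of the final steps of the paper's proof ($g^{-n}\rightsquigarrow (ef)^n$, then $(ef)^n\rightsquigarrow e^nf^n$ at cost $2|n|^2+2|n+1|^2$, then free cancellation down to $e^{-1}f^{-1}ef\rightsquigarrow gg^{-1}$), and it is precisely the step your proposal omits.

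Relatedly, Lemma~\ref{lem5} applied to the reversed cycle never genuinely enters the argument: $\Phi_n(\ol{g}\,\ol{f}\,\ol{e})=\Phi_n(\ol{g})\Phi_n(\ol{f})\Phi_n(\ol{e})$ has its factors in the opposite order to the word $\Phi_n(\ol{e})\Phi_n(\ol{f})\Phi_n(\ol{g})$ you actually produce, and reversing that order is itself a quadratic-cost commutation, not a free or linear-cost move. A symptom of the gap is that your bookkeeping does not reach the stated constant: $3K|n|^2+3|n+1|^2$ has leading coefficient $3K+3$, whereas the lemma's $3K+4$ records the extra $2|n|^2+2|n+1|^2$ commutation cost that your scheme never pays.
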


\begin{proof}
  The following is a null-$\mc{P}_H$-scheme for the word $\Phi_n(e^{-1} f^{-1}
  g^{-1})$: \begin{align*}
    \Phi_n(e^{-1} &f^{-1} g^{-1}) &&\\
    &\equiv p_n(\tau e, q)^{-1} e^{-n-1} p_n(q, \iota e)^{-1}
    p_n(\tau f, q)^{-1} f^{-n-1} \ldots &&\\
    &\qquad \ldots
    p_n(q, \iota f)^{-1} p_n(\tau g, q)^{-1} g^{-n-1} p_n(q, \iota g)^{-1} &&\\
    &\rightsquigarrow p_n(q, \tau e) e^{-n-1} p_n(\iota e, q)
    p_n(q, \tau f) f^{-n-1} p_n(\iota f, q) \ldots &&\\
    &\qquad \ldots
    p_n(q, \tau g) g^{-n-1} p_n(\iota g, q) &&\text{Cost} \leq 6L|n|\\
    &\stackrel{\rm free}{=}
    p_n(q, \iota f) e^{-n-1} p_n(\tau g, q)
    p_n(q, \iota g) f^{-n-1} p_n(\tau e, q) &&\\
    &\qquad \ldots
    p_n(q, \iota e) g^{-n-1} p_n(\tau f, q)
    p_n(q , \iota f) p_n(q, \iota f)^{-1} &&\\
    &\rightsquigarrow p_n(q, \iota f) e^{-n-1} g^{-n} f^{-n-1}
    e^{-n} g^{-n-1} f^{-n} p_n(q, \iota f)^{-1} &&\text{Cost}
    \leq 3K|n|^2\\
    &\rightsquigarrow p_n(q, \iota f) e^{-n-1} (ef)^n f^{-n-1}
    e^{-n} (ef)^{n+1} f^{-n} p_n(q, \iota f)^{-1} &&\text{Cost}
    \leq 2|n| + 1\\
    &\rightsquigarrow p_n(q, \iota f) e^{-n-1} e^n f^n f^{-n-1}
    e^{-n} e^{n+1} f^{n+1} f^{-n} p_n(q, \iota f)^{-1} &&\text{Cost}
    \leq 2|n|^2 + 2|n+1|^2\\
    &\stackrel{\rm free}{=} p_n(q, \iota f) e^{-1} f^{-1} e f
    p_n(q, \iota f)^{-1} &&\\
    &\rightsquigarrow p_n(q, \iota f) g g^{-1}
    p_n(q, \iota f)^{-1} &&\text{Cost} \leq 2\\
    &\stackrel{\rm free}{=} \emptyset.
  \end{align*}  Total cost $\leq (3K + 4) |n|^2 + (6L + 6) |n| + 5$.
\end{proof}

\begin{lem} \label{lem7}
  $\Area_{\mc{P}_H} \big( \Phi_{n+1}(e) \Phi_n(w_e)^{-1} \big)
  \leq 2K|n|^2 + (3L^2 + 2L + 2K)|n| + L + K$ for all $e \in \Edge(\Delta)$, where $K$ is
  the constant from Lemma~\ref{lem3}.
\end{lem}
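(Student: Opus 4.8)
The plan is to exhibit an explicit null-$\mc{P}_H$-scheme for the word $\Phi_{n+1}(e)\Phi_n(w_e)^{-1}$, in the style of Lemmas~\ref{lem5} and~\ref{lem6}, and to add up the costs of its transitions. Throughout I use, without further comment, that the $\mc{P}_H$-area of a word is unchanged by free equality, by cyclic permutation, and by conjugation by a fixed word; all three facts are immediate from the definition of a null-$\mc{P}_H$-expression. The two ingredients that do the work are Lemma~\ref{lem3} --- which says that for every $m\in\Z$ the word $\mu_m(e):=p_m(q,\iota e)\,e^m\,p_m(\tau e,q)$ is null-homotopic with $\Area_{\mc{P}_H}(\mu_m(e))\le K|m|^2$ --- and the elementary rewriting from the proof of Lemma~\ref{lem4}, which converts $p_n(q,v)^{-1}$ into $p_n(v,q)$, and so deletes any subword $p_n(v,q)\,p_n(q,v)$, at $\mc{P}_H$-cost at most $L|n|$.

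Phase~1 rewrites $\Phi_n(w_e)$ inside the word. Since $w_e\equiv p(q,\iota e)\,e\,p(\iota e,q)$ and $\Phi_n$ is a monoid homomorphism commuting with inversion, $\Phi_n(w_e)\equiv\Phi_n(p(q,\iota e))\,\Phi_n(e)\,\Phi_n(p(\iota e,q))$. Writing the tree geodesic $p(q,\iota e)$ as $e_1\cdots e_l$, so $l=\Dist(q,\iota e)\le L$, and expanding $\Phi_n(p(q,\iota e))=\prod_{i=1}^l p_n(q,\iota e_i)\,e_i^{n+1}\,p_n(\tau e_i,q)$, one sees (using $\tau e_i=\iota e_{i+1}$) that consecutive factors abut in a subword $p_n(v,q)\,p_n(q,v)$ which can be deleted at cost $\le L|n|$; after the $l-1$ deletions the word telescopes to $p_{n+1}(q,\iota e)\,p_n(\iota e,q)$. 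The analogous computation for the reverse geodesic rewrites $\Phi_n(p(\iota e,q))$ to $p_n(q,\iota e)\,p_{n+1}(\iota e,q)$. Reassembling and deleting one further subword $p_n(\iota e,q)\,p_n(q,\iota e)$ where the first tree block meets $\Phi_n(e)$, we convert $\Phi_n(w_e)$, at total $\mc{P}_H$-cost at most $(2L^2-L)|n|$, to
\[W:=p_{n+1}(q,\iota e)\;e^{n+1}\;p_n(\tau e,q)\;p_n(q,\iota e)\;p_{n+1}(\iota e,q),\]
and hence convert $\Phi_{n+1}(e)\Phi_n(w_e)^{-1}$ to $\Phi_{n+1}(e)\,W^{-1}$ at the same cost.

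Phase~2 reduces $\Phi_{n+1}(e)\,W^{-1}$ to the empty word. Substituting $\Phi_{n+1}(e)=p_{n+1}(q,\iota e)\,e^{n+2}\,p_{n+1}(\tau e,q)$, the word $\Phi_{n+1}(e)\,W^{-1}$ is the conjugate, by the fixed word $p_{n+1}(q,\iota e)\,e^{n+1}$, of
\[Z:=e\;p_{n+1}(\tau e,q)\;p_{n+1}(\iota e,q)^{-1}\;p_n(q,\iota e)^{-1}\;p_n(\tau e,q)^{-1},\]
so it suffices to bound $\Area_{\mc{P}_H}(Z)$. This is where Lemma~\ref{lem3} is invoked twice. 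After inserting the free pair $e^{-n}e^{n}$ at the front of $Z$, its prefix becomes $e^{-n}\,e^{n+1}\,p_{n+1}(\tau e,q)$; since $e^{n+1}\,p_{n+1}(\tau e,q)\,p_{n+1}(q,\iota e)$ is a cyclic permutation of $\mu_{n+1}(e)$, and hence has $\mc{P}_H$-area $\le K|n+1|^2$ by Lemma~\ref{lem3}, the subword $e^{n+1}\,p_{n+1}(\tau e,q)$ can be replaced by $p_{n+1}(q,\iota e)^{-1}$ at cost $\le K|n+1|^2$; next $p_{n+1}(q,\iota e)^{-1}\,p_{n+1}(\iota e,q)^{-1}$ is deleted by peeling off $e\ol{e}$-relators at cost $\le L|n+1|$; and the word that remains, $e^{-n}\,p_n(q,\iota e)^{-1}\,p_n(\tau e,q)^{-1}$, is a cyclic permutation of $\mu_n(e)^{-1}$ and so has $\mc{P}_H$-area $\le K|n|^2$. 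Adding up every contribution gives
\[\Area_{\mc{P}_H}\big(\Phi_{n+1}(e)\Phi_n(w_e)^{-1}\big)\le(2L^2-L)|n|+K|n+1|^2+L|n+1|+K|n|^2,\]
and using $|n+1|\le|n|+1$ this is at most $2K|n|^2+(2L^2+2K)|n|+(K+L)$, a fortiori at most the asserted $2K|n|^2+(3L^2+2L+2K)|n|+L+K$.

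The only real difficulty is the bookkeeping of Phase~1 --- keeping track of exactly which factors $p_n(u,v)$ occur, in which order, and which pairs cancel --- together with the routine but necessary degenerate case $\iota e=q$, in which $p(q,\iota e)$ is the empty word and several of the steps collapse. No separate check that $Z$ (or the intermediate words) is null-homotopic is needed: $Z$ is conjugate to $\Phi_{n+1}(e)\,W^{-1}$, which represents $\theta^{n+1}(e)\,\theta^{n+1}(e)^{-1}=1$ in $H_\Delta$, since $w_e$ represents $\theta(e)$ and hence $\Phi_n(w_e)$ represents $\theta^{n+1}(e)$.
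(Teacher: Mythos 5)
Your proof is correct and follows essentially the same route as the paper's: both arguments telescope $\Phi_n$ applied to the tree geodesics of $w_e$ at cost $O(L^2|n|)$ to reach the same intermediate word $p_{n+1}(q,\iota e)\,e^{n+1}\,p_n(\tau e,q)\,p_n(q,\iota e)\,p_{n+1}(\iota e,q)$, then finish with one application of the $p_m(v,q)\leftrightarrow p_m(q,v)^{-1}$ conversion and two applications of Lemma~\ref{lem3}, yielding the same $2K|n|^2$ leading term. The only differences are presentational (you invoke invariance of area under conjugation and cyclic permutation where the paper writes the conjugators out explicitly in a linear null-$\mc{P}_H$-scheme), and your constants are in fact slightly sharper than those stated in the lemma.
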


\begin{proof}
  Note that if $e_1 \cdot \ldots \cdot e_l$ is a combinatorial edge-path
  in $\Delta$ then $\Phi_n(e_1 \ldots e_l) = \prod_{i=1}^l p_n(q,
  \iota e_i) e_i^{n+1} p_n(\tau e_i, q)$ can be converted to
  \[\prod_{i=1}^l p_n(q, \iota e_i) e_i^{n+1} p_n(q, \tau e_i)^{-1}
  \stackrel{\rm free}{=} p_n(q, \iota e_1) e_1^{n+1} \ldots
  e_l^{n+1} p_n(q, \tau e_l)^{-1}\] at a $\mc{P}_H$-cost of at
  most $lL|n|$.  It follows that for all $u, v \in \Vert(\Delta)$ the
  word $\Phi_n \big( p(u, v) \big)$ can
  be converted to the word $p_n(q, u) p_{n+1}(u, v) p_n(q, v)^{-1}$ at
  a $\mc{P}_H$-cost of at most $L^2 |n|$.

  The following is a null-$\mc{P}_H$-scheme for the word $\Phi_{n+1}(e)
  \Phi_n(w_e)^{-1}$: \begin{align*}
    \Phi_{n+1}(e) &\Phi_n(w_e)^{-1} &&\\
    &\equiv p_{n+1}(q, \iota e)
    e^{n+2} p_{n+1}(\tau e, q) \left[\Phi_n \big( p(q, \iota e) e
    p(\iota e, q) \big) \right]^{-1} &&\\
    &\rightsquigarrow p_{n+1}(q, \iota e)
    e^{n+2} p_{n+1}(\tau e, q) \big[ p_{n+1}(q, \iota e) p_n(q,
    \iota e)^{-1}\ldots &&\\
    &\qquad \ldots p_n(q, \iota e) e^{n+1} p_n(\tau e, q) p_n(q,
    \iota e) p_{n+1}(\iota e, q) \big]^{-1} &&\text{Cost} \leq
    2L^2|n|\\
    &\stackrel{\rm free}{=} p_{n+1}(q, \iota e)
    e^{n+2} p_{n+1}(\tau e, q) p_{n+1}(\iota e, q)^{-1} \ldots &&\\
    &\qquad \ldots p_n(q, \iota
    e)^{-1} p_n(\tau e, q)^{-1} e^{-n-1} p_{n+1}(q, \iota e)^{-1} &&\\
    &\rightsquigarrow p_{n+1}(q, \iota e) e^{n+2} p_{n+1}(\tau e, q)
    p_{n+1}(q, \iota e) \ldots &&\\
    &\qquad \ldots p_n(q, \iota e)^{-1} p_n(\tau e, q)^{-1} e^{-n-1}
    p_{n+1}(q, \iota e)^{-1} &&\text{Cost} \leq L|n+1|\\
    &\rightsquigarrow p_{n+1}(q, \iota e) e^{n+2} e^{-n-1} e^n
    e^{-n-1} p_{n+1}(q, \iota e)^{-1} &&\text{Cost} \leq K|n+1|^2 +
    K |n|^2\\
    &\stackrel{\rm free}{=} \emptyset.
  \end{align*} Total cost $\leq 2K|n|^2 + (2L^2 + L + 2K)|n| + L + K$.
\end{proof}

Combining Lemmas~\ref{lem4}, \ref{lem5}, \ref{lem6} and \ref{lem7}
we see that
\[\RArea(n) \leq (3K+4) n^2 + (6L^2 + 2K + 6) n + L + K + 5.\]  This completes the
proof of Theorem~\ref{thm1}.

\section{A Bestvina-Brady Group with Quartic Dehn Function}

In Section~2.5.2 of \cite{Brady1} Brady gives a sequence $(K_m)_{m
\in \N}$ of finite, flag simplicial complexes and suggests that the
Bestvina-Brady group associated to $K_m$ will have Dehn function
$\delta(n) \simeq n^{m+2}$.  Theorem \ref{thm1} shows that this
cannot be the case.  However, the construction does work in the
cases $m=1$ and $m=2$ and the example $K_2$ thus shows that the
bound obtained in Theorem~\ref{thm1} cannot be improved in general.
We briefly recount that example here.

\begin{figure}[h]
  \psfrag{a}{\emph{a}}
  \psfrag{b}{$b$}
  \psfrag{c}{$c$}
  \psfrag{d}{$d$}
  \centering \mbox{\includegraphics*{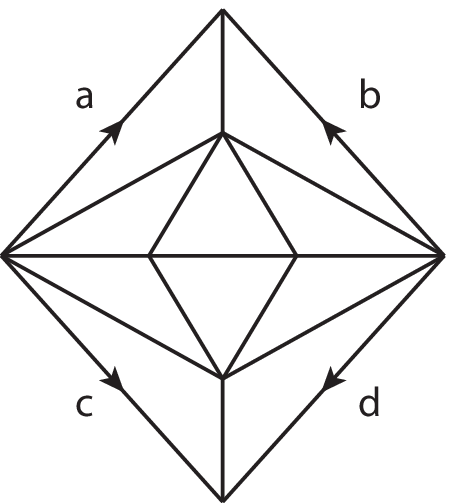}}  \caption{} \label{fig1}
\end{figure}

The complex $K_2$ is the triangulation of the disc shown in Figure~\ref{fig1}.  Let $H_{K_2}$ and $A_{K_2}$ be the Bestvina-Brady and
right-angled Artin groups respectively associated to $K_2$.  Choose
an orientation of the edges of $K_2$ so as the four edges labelled in
the figure are orientated as indicated. Let $\overline{\Edge}(K_2)
\leq \Edge(K_2)$ be the index $2$ subgroup consisting of the
positively orientated edges. Let $\mc{P}_H$ be the Dicks-Leary
presentation for $H_{K_2}$ with generating set $\Edge(K_2$), as
described in Section~\ref{sec1}. Derive from $\mc{P}_H$ the
presentation $\mc{Q}_H$ for $H$ with generating set
$\overline{\Edge}(K_2)$ by using Tietze transformations to remove
all the superfluous generators $\Edge(K_2) \smallsetminus
\overline{\Edge}(K_2)$ and all the superfluous relators $\{e
\overline{e} \, : \, e \in \Edge(K_2) \}$.

For each $k \in \N$ define $w_k \in \overline{Edge}(K_2)^{\pm\bast}$
to be the null-homotopic word \\$(da)^k (b^{-1} c^{-1})^k (ad)^k
(c^{-1} b^{-1})^k$, where $a$, $b$, $c$ and $d$ are the orientated
edges labelled in the figure.  In \cite{Brady1} Brady describes how
to construct a van Kampen diagram $\Omega_k$ over the presentation
$\mc{Q}_H$ with boundary label $w_k$ and $\Area(\Omega_k) \simeq
k^4$.  It is shown that the presentation $2$-complex $\Sigma$
associated to $\mc{Q}_H$ is aspherical and that the diagram
$\Omega_k$ embeds in the universal cover of $\Sigma$. It follows
that $\Area(w_k) = \Area(\Omega_k)$ and hence that the Dehn function
of $\mc{Q}_H$ is $\simeq n^4$.

\bigskip

\emph{Acknowledgements.} I would like to thank my thesis advisor, Martin Bridson, for his many helpful comments made during the preparation of this article.

\bibliography{BBGroups.bbl}

\def\cprime{$'$}
\begin{thebibliography}{1}

\bibitem{Bestvina1}
M.~Bestvina and N.~Brady.
\newblock Morse theory and finiteness properties of groups.
\newblock {\em Invent. Math.}, 129(3):445--470, 1997.

\bibitem{Brady1}
N.~Brady.
\newblock Dehn functions and non-positive curvature.
\newblock In {\em The Geometry of the Word Problem for Finitely Generated
  Groups}, Advanced Courses in Mathematics. CRM Barcelona, pages 1--79.
  Birkh\"auser Verlag, Basel, 2007.

\bibitem{brid02}
M.~R. Bridson.
\newblock The geometry of the word problem.
\newblock In {\em Invitations to geometry and topology}, volume~7 of {\em Oxf.
  Grad. Texts Math.}, pages 29--91. Oxford Univ. Press, Oxford, 2002.

\bibitem{brid99}
M.~R. Bridson and A.~Haefliger.
\newblock {\em Metric spaces of non-positive curvature}, volume 319 of {\em
  Grundlehren der Mathematischen Wissenschaften [Fundamental Principles of
  Mathematical Sciences]}.
\newblock Springer-Verlag, Berlin, 1999.

\bibitem{Charney1}
R.~Charney and M.~W. Davis.
\newblock Finite {$K(\pi, 1)$}s for {A}rtin groups.
\newblock In {\em Prospects in topology (Princeton, NJ, 1994)}, volume 138 of
  {\em Ann. of Math. Stud.}, pages 110--124. Princeton Univ. Press, Princeton,
  NJ, 1995.

\bibitem{Dicks1}
W.~Dicks and I.~J. Leary.
\newblock Presentations for subgroups of {A}rtin groups.
\newblock {\em Proc. Amer. Math. Soc.}, 127(2):343--348, 1999.

\bibitem{Fenn1}
R.~A. Fenn.
\newblock {\em Techniques of geometric topology}, volume~57 of {\em London
  Mathematical Society Lecture Note Series}.
\newblock Cambridge University Press, Cambridge, 1983.

\bibitem{Riley1}
T.~Riley.
\newblock Filling functions.
\newblock In {\em The Geometry of the Word Problem for Finitely Generated
  Groups}, Advanced Courses in Mathematics. CRM Barcelona, pages 81--151.
  Birkh\"auser Verlag, Basel, 2007.

\end{thebibliography}

\end{document}